\documentclass[10pt]{article}
\usepackage{amscd, enumerate, amsmath, amssymb,amsthm}
\usepackage[dvips]{color}

\setlength{\oddsidemargin}{0pt}
\setlength{\topmargin}{0pt}
\setlength{\textwidth}{450pt}
\setlength{\textheight}{600pt}

\newtheorem{defn}{Definition}[section]
\newtheorem{thm}[defn]{Theorem}
\newtheorem{prop}[defn]{Proposition}
\newtheorem{lem}[defn]{Lemma}
\newtheorem{rem}[defn]{Remark}
\newtheorem{ex}[defn]{Example}

\numberwithin{equation}{section}

\newcommand{\lam}{\lambda}
\newcommand{\F}{{\mathcal F}}
\newcommand{\E}{{\mathcal E}}
\newcommand{\N}{{\mathcal N}}

\newcommand{\R}{{\mathbb R}}

\newcommand{\cH}{{\mathcal H}}

\newcommand{\la}{\langle}
\newcommand{\ra}{\rangle}

\newcommand{\1}{{\bf 1}}
\newcommand{\0}{{\bf 0}}

\title{Gram matrices of reproducing kernel Hilbert spaces over graphs}

\author{
{\sc Michio SETO}
\thanks{The author was supported by Grant-in-Aid for Young Scientists (B) (23740106).}\\
[1ex]
{\small Shimane University, 
Matsue 690-8504, Japan} \\
{\small 
{\it E-mail address}: {\tt mseto@riko.shimane-u.ac.jp}}\\
\\
{\sc Sho SUDA}
\thanks{The author was supported
by JSPS Research Fellowships for Young Scientists.}\\
[1ex]
{\small International Christian University,
Mitaka 181-8585, Japan} \\
{\small 
{\it E-mail address}: {\tt p001883e@nt.icu.ac.jp}}\\
\\
{\sc Tetsuji TANIGUCHI}\\
[1ex]
{\small 
Matsue College of Technology, 
Matsue 690-8518, Japan} \\
{\small 
{\it E-mail address}: {\tt tetsuzit@matsue-ct.ac.jp}}
}

\begin{document}

\maketitle

\begin{abstract}
In this paper, we introduce the notion of reproducing kernel Hilbert spaces for graphs and the Gram matrices associated with them.
Our aim is to investigate the Gram matrices of reproducing kernel Hilbert spaces. 
We provide several bounds on the entries of the Gram matrices of reproducing kernel Hilbert spaces and characterize the graphs which attain our bounds.

\end{abstract}

\section{Introduction}\label{Introduction}
The theory of reproducing kernel Hilbert spaces is one of richest areas in functional analysis. 
It originated from Sturm-Liouville theory in ordinary differential equations and Cauchy's integral formula in complex analysis, 
and serves nice frameworks to many fields in pure mathematics, 
applied mathematics, statistics and engineering (see \cite{S1,S2}).  
The purpose of this paper is to investigate graphs via theory of reproducing kernel Hilbert spaces. 
Since this paper is directed to mathematicians not only in functional analysis but also in the graph theory, 
we would like to include details of our basic idea. 
\begin{defn}
A vector space $\cH$ is called a reproducing kernel Hilbert space 
{\rm(}which will be abbreviated to RKHS{\rm )} over some set $\Omega$ if 
\begin{enumerate}
\item $\cH$ is a Hilbert space consisting of functions on $\Omega$, 
\item for any $x$ in $\Omega$, 
there exists a non-zero function $k_x$ in $\cH$ such that $f(x)=\la f,k_x \ra_{\cH}$ for any function $f$ in $\cH$, 
where $\la\cdot,\cdot\ra_{\cH}$ denotes the inner product of $\cH$. 
\end{enumerate}
\end{defn}
The above $k_x$ is called the reproducing kernel of $\cH$ at $x$. 
Then the set of reproducing kernels $\{k_x\}_{x\in \Omega}$ is a linearly independent dense subset of $\cH$. 
Setting $k(x,y)=\la k_y,k_x\ra_{\cH}$, which defines a two variable function on $\Omega\times \Omega$. 
In particular, $K=(k(x,y))_{x,y}$ can be identified with a self-adjoint matrix 
if $\Omega$ is a finite set, and which is called the Gram matrix of $\cH$.
In this paper, we construct real RKHS's encoding data of graphs, 
and investigate their Gram matrices. 
As will be shown in Section~\ref{sec:Gramregular}, the Gram matrix of $\cH$ coincides with the inverse of the sum of the Laplacian matrix and the all-ones matrix.
This matrix has been extensively studied in the context of the generalized inverse matrix of the Laplacian matrix, see \cite{B1,B2} and references therein. 
In fact, the main results of this paper are Theorems~\ref{thm:1000}, \ref{thm:1001} and \ref{thm:112}.
These results provide bounds on the entries of the Gram matrices of reproducing kernel spaces of graphs and characterize graphs which attain our bounds.

Before stating our results,  
we should mention the work of Nagai, Kametaka, Yamagishi, Takemura and Watanabe in \cite{NKYTW}, 
where some similar results have been obtained. 
One of differences from their research is that our interest is  analysis of graphs from RKHS point of view.  

The paper is organized as follows. 
Section~\ref{Preliminary} is a short introduction to the graph theory which we need in this paper. 
In Section~\ref{RKHS}, we introduce Sobolev type real RKHS's over graphs, which will be denoted $\cH_G$. 
In Section~\ref{gram}, we deal with Gram matrices of  $\cH_G$. 
An estimation of spectra of Gram matrices is given. 
In Section~\ref{sec:Gramregular}, we show that the Gram matrix is the inverse of the sum of the Laplacian matrix and the all-ones matrix.
It turns out that reciprocal of the positive eigenvalues of the Laplacian matrix of the graph are eigenvalues of the Gram matrix and eigenspaces of both matrices coincide.
Finally in Section~\ref{sec:entry} we focus on the maximum and minimum value of entries of the Gram matrix $K$.
We provide the upper and lower bounds for diagonal entries of the Gram matrix of a graph and 
the upper and lower bound for minimum entries of the Gram matrix of a tree.
In both cases, we characterize the graph which attains each bound.

\section{Preliminary}\label{Preliminary}
A graph $G=(V(G),E(G))$ is a pair of a nonempty finite set $V(G)$, which is called the vertex set, and a subset $E(G)$ in $\{\{x,y\}: x,y \in V \text{ and } x\neq y\}$, which is called the edge set.
In this paper, a graph means always simple, namely it has neither loops nor mulitiedges, and has finite vertices. 
We abbreviate $V=V(G)$ and $E=E(G)$ if there is no confusion.
For any vertex $i\in V$, let $d_i$ or $\deg_{G}(i)$ denote the degree $|\{j\in V:\{i,j\}\in E\}|$.
A path from $x$ to $y$ in the graph $G$ is a sequence of $x_0=x,x_1,\ldots,x_l=y$ such that $\{x_i,x_{i+1}\}\in E(G)$ for $0\leq i\leq l-1$ and the vertices $x_0,\ldots,x_l$ are all distinct.   
The graph $G$ is said to be connected if, for any two distinct vertices in $G$, there exists a path from one to the other.
A graph is said to be a tree if, for any two distinct vertices, there exists the unique path between them. 
Let $d=d_G$ denotes the path-length distance for a connected graph $G$.
Graphs $G_1=(V(G_1),E(G_1))$ and $G_2=(V(G_2),E(G_2))$ are isomorphic if there exists a bijection $\Phi:V(G_1)\rightarrow V(G_2)$ such that $(x,y)\in E(G_1)$ if and only if $(\Phi(x),\Phi(y))\in E(G_2)$.

The adjacency matrix of $G$ is a square matrix $A$ whose rows and columns are indexed by $V$ with $(x,y)$-entry $1$ if $\{x,y\}\in E$ and $0$ otherwise.
The degree matrix $D$ of $G$ is a diagonal matrix with $(x,x)$-entry equal to its degree.
The Laplacian matrix $L$ of $G$ is defined to be $L=D-A$.
Let $\lambda_j$ ($1\leq j\leq s$) be all distinct eigenvalues of $L$ with increasing order $\lambda_1<\cdots<\lambda_s$.
Let $m_i$ be the multiplicity of $\lambda_i$. 
It is known that $L$ has the smallest eigenvalue $0$ with multiplicity $1$ and its eigenvector is the all-ones vector $\1$ provided that $G$ is connected.

Let $E_j$ denotes the projection onto the eigenspace corresponding to $\lambda_j$ for $1\leq j\leq s$.
Then $E_1=\frac{1}{n}J$ holds, where $n$ denotes the number of vertices and $J$ denotes the all-ones matrix. 
Since $L$ is symmetric, $L$ has the following spectral decomposition; $L=\sum_{j=1}^s \lambda_j E_j$ with 
$E_j^T=E_j$, $E_i E_j=\delta_{i j}$ and $\sum_{j=1}^s E_j=I$, where $I$ denotes the identity matrix and $\delta_{i j}$ denotes the Kronecker delta.

\section{\mathversion{bold}RKHS $\cH_G$}\label{RKHS}
Let $G$ be a connected graph with adjacency matrix $A$. 
The set of all real valued functions on $V$ will be denoted by $\F=\F(G)$.  
We define a bilinear form on $\F$ as follows: 
$$\E(u,v)=\frac{1}{2}\sum_{x,y\in V}A_{x y}(u(x)-u(y))(v(x)-v(y)).$$

\begin{lem}[Cauchy-Schwarz]\label{lem:3-1}
$|\E(u,v)|^2\leq \E(u,u)\E(v,v)$. 
\end{lem}

\begin{proof}\rm
It is the same as the usual inner product case. 
\end{proof}

\begin{lem}\label{lem:3-2} 
$\E(u,u)=0$ if and only if $u$ is constant on $V$.  
\end{lem}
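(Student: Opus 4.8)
The plan is to expand the quadratic form explicitly and exploit the fact that it is a sum of manifestly nonnegative terms. First I would write
$$\E(u,u)=\frac{1}{2}\sum_{x,y\in V}A_{x y}(u(x)-u(y))^2,$$
and note that each coefficient $A_{x y}$ lies in $\{0,1\}$ while each factor $(u(x)-u(y))^2$ is nonnegative, so the entire sum is a sum of nonnegative reals. This reformulation immediately dispatches the easy implication: if $u$ is constant on $V$, then $u(x)-u(y)=0$ for every pair $x,y$, every summand vanishes, and hence $\E(u,u)=0$.

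For the converse, I would assume $\E(u,u)=0$. Since a sum of nonnegative terms can be zero only when every term is zero, we must have $A_{x y}(u(x)-u(y))^2=0$ for all $x,y\in V$. Consequently, whenever $\{x,y\}\in E$ (equivalently $A_{x y}=1$), the value $u(x)-u(y)$ must vanish; that is, $u$ takes the same value on any two adjacent vertices.

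The main step is then to upgrade this local ``constant along each edge'' property to a global ``constant on all of $V$'' statement, and this is exactly where the standing hypothesis that $G$ is connected enters. Given arbitrary vertices $x,y\in V$, connectedness supplies a path $x=x_0,x_1,\ldots,x_l=y$ with $\{x_i,x_{i+1}\}\in E$ for each $i$; applying the edge condition successively yields $u(x_0)=u(x_1)=\cdots=u(x_l)$, so $u(x)=u(y)$. As $x,y$ were arbitrary, $u$ is constant on $V$. I expect the only genuine subtlety to be precisely this use of connectedness: without it the conclusion fails, since on a disconnected graph a function constant on each component but differing between components would satisfy $\E(u,u)=0$ without being globally constant. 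Everything else is a routine ``sum of squares is zero'' argument.
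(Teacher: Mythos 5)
Your proof is correct and follows essentially the same route as the paper: the forward direction is immediate, and the converse uses the fact that a vanishing sum of nonnegative terms forces $u(x)=u(y)$ across every edge, after which connectedness propagates the value along a path between any two vertices. The paper's own proof is just a terser version of exactly this argument, so nothing further is needed.
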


\begin{proof}\rm
The only if part is trivial. 
We shall show the if part. 
Assume that  
$$\frac{1}{2}\sum_{x,y\in V}A_{x y}|u(x)-u(y)|^2=0.$$
Then it follows that $u(x)=u(y)$ if $A_{x y}\neq 0$. 
Since $G$ is connected, $u$ is constant. 
\end{proof}

We set $\N=\R \1$ and $([u],[v])=\E(u,v)$ for $[u],[v]$ in $\F/\N$. 
Note that $([u],[v])$ is well defined. 
Indeed, 
if $[u]=[u']$ and $[v]=[v']$, then we have that 
\begin{align*}
|\E(u,v)-\E(u',v')|
&\leq |\E(u,v)-\E(u,v')|+|\E(u,v')-\E(u',v')|\\
&=|\E(u,v-v')|+|\E(u-u',v')|\\
&\leq \{(\E(u,u)\E(v-v',v-v')\}^{1/2}+\{\E(u-u',u-u')\E(v,v)\}^{1/2}\\
&=0
\end{align*}
by Cauchy-Schwarz inequality (Lemma \ref{lem:3-1}). 
We define another bilinear form on $\F$ as follows: 
$$\la u,v \ra=(\sum_{x\in V}u(x))(\sum_{x\in V}v(x))+([u],[v])$$
for 
$u$ and $v$ in $\F$. 

\begin{lem}\label{lem:3-3}
$\la\cdot, \cdot\ra$ is an inner product invariant under graph isomorphisms. 
\end{lem}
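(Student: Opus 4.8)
The plan is to verify the inner-product axioms first and then the isomorphism invariance, treating the two assertions as separate tasks.

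Symmetry and bilinearity of $\la\cdot,\cdot\ra$ are immediate from the definition, since both the factor $(\sum_{x\in V}u(x))(\sum_{x\in V}v(x))$ and the term $([u],[v])=\E(u,v)$ are symmetric and bilinear in $u$ and $v$ (the latter being well defined by the check preceding the definition of $\la\cdot,\cdot\ra$). Hence the only substantive axiom is positive definiteness, and I would reduce the whole verification to that point.

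For positive definiteness, observe that $\la u,u\ra=(\sum_{x\in V}u(x))^2+\E(u,u)$ is a sum of two nonnegative terms, so $\la u,u\ra\geq 0$ always, and it remains only to show that $\la u,u\ra=0$ forces $u=0$. If $\la u,u\ra=0$, then both summands vanish; in particular $\E(u,u)=0$, so Lemma~\ref{lem:3-2} gives that $u$ is constant on $V$, say $u\equiv c$. But then $\sum_{x\in V}u(x)=cn=0$ with $n=|V|\geq 1$, whence $c=0$ and $u=0$. This is exactly where connectivity of $G$ enters, through Lemma~\ref{lem:3-2}: the vanishing of $\E$ only pins $u$ down to the constants $\N=\R\1$, and it is precisely the first summand that removes this remaining one-dimensional ambiguity.

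For invariance, let $\Phi:V(G_1)\to V(G_2)$ be a graph isomorphism and let $\Phi^{*}:\F(G_2)\to\F(G_1)$ denote the induced pullback $\Phi^{*}f=f\circ\Phi$, which is a linear bijection. I would show $\la\Phi^{*}u,\Phi^{*}v\ra_{G_1}=\la u,v\ra_{G_2}$ for all $u,v\in\F(G_2)$ by matching the two summands. Since $\Phi$ is a bijection on vertices, reindexing gives $\sum_{x\in V(G_1)}(\Phi^{*}u)(x)=\sum_{x\in V(G_1)}u(\Phi(x))=\sum_{y\in V(G_2)}u(y)$, so the first summands agree. For the Dirichlet term, the isomorphism condition $\{x,y\}\in E(G_1)\Leftrightarrow\{\Phi(x),\Phi(y)\}\in E(G_2)$ says exactly that $A^{G_1}_{xy}=A^{G_2}_{\Phi(x)\Phi(y)}$, so the substitution $x'=\Phi(x)$, $y'=\Phi(y)$ turns $\E_{G_1}(\Phi^{*}u,\Phi^{*}v)$ into $\E_{G_2}(u,v)$. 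Adding the two identities yields the claimed equality.

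I do not expect a genuine difficulty here, as the argument is entirely structural. The one point that requires care is bookkeeping rather than depth: making the notion of invariance precise by fixing the induced map $\Phi^{*}$ and keeping the two graphs' adjacency matrices and vertex sums notationally distinct, so that the reindexing in the Dirichlet term is unambiguous.
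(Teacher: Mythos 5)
Your proof is correct and follows essentially the same route as the paper: positive definiteness via Lemma~\ref{lem:3-2} plus the vanishing of the vertex sum, and invariance via reindexing under the vertex bijection together with $A_{xy}=A_{\Phi(x)\Phi(y)}$. The only cosmetic difference is that you verify the bilinear identity $\la\Phi^{*}u,\Phi^{*}v\ra=\la u,v\ra$ directly, whereas the paper checks the quadratic form $\la u,u\ra$ and invokes the polarization identity.
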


\begin{proof}\rm 
We set $c=\sum_{x\in V}u(x)$. 
If $\la u,u \ra=0$ then $|c|^2=0$ and $([u],[u])=0$. 
Then, by Lemma \ref{lem:3-2}, $u$ is constant. 
Therefore we have that $u=0$. 
Thus $\la\cdot,\cdot\ra$ is an inner product. 
Next, 
let $\Phi$ be a graph isomorphism from $G_1$ onto $G_2$, and 
let $\la \cdot,\cdot\ra_1$ and $\la\cdot ,\cdot\ra_2$ be 
the corresponding inner products of $G_1$ and $G_2$, respectively. 
Then, since $A_{x y}=A_{\Phi^{-1}(x)\Phi^{-1}(y)}$, we have that 
\begin{align*}
&\quad\la u\circ \Phi^{-1} ,u\circ \Phi^{-1} \ra_2\\
&=|\sum_{x\in V(G_2)}u(\Phi^{-1}(x))|^2
+\frac{1}{2}\sum_{x,y\in V(G_2)}A_{x y}|u(\Phi^{-1}(x))-u(\Phi^{-1}(y))|^2\\
&=|\sum_{x\in V(G_2)}u(\Phi^{-1}(x))|^2
+\frac{1}{2}\sum_{x,y\in V(G_2)}A_{\Phi^{-1}(x) \Phi^{-1}(y)}|u(\Phi^{-1}(x))-u(\Phi^{-1}(y))|^2\\
&=|\sum_{x'\in V(G_1)}u(x')|^2+\frac{1}{2}\sum_{x',y'\in V(G_1)}A_{x'y'}|u(x')-u(y')|^2\\
&=\la u,u\ra_1. 
\end{align*}
By the polarization identity, 
it follows that inner product $\la\cdot,\cdot\ra$ is 
invariant under graph isomorphisms. 
This concludes the proof. 
\end{proof}

\begin{defn}
$\cH_G$ will denote the Hilbert space $(\F,\la\cdot,\cdot\ra)$. 
The norm induced by $\la \cdot,\cdot \ra$ will be denoted by $\|\cdot\|_{\cH_G}$, 
that is, we set 
$$\|u\|_{\cH_G}^2=|\sum_{x\in V}u(x)|^2+\E(u,u).$$
$\cH_G$ will be abbreviated as $\cH$ if no confusion occurs. 
\end{defn}

Since the dimension of $\cH$ is finite, 
the point evaluation on $\cH$ is norm continuous. 
By Riesz representation theorem, 
there exists a non-zero element $k_x$ in $\cH$ such that $u(x)=\la u,k_x \ra$ for any $u$ in $\cH$. 
We note that $k_x$ is uniquely determined. 
This $k_x$ is called the reproducing kernel of $\cH$ at $x$. 
In this paper, $k_x(y)$ will be denoted by $k(y,x)$. 
It is easy to see that $\{k_x\}_{x\in V}$ is linearly independent. 

\begin{defn}
Let $G_1$ and $G_2$ be connected graphs.  
We will say that $\cH_{G_1}$ and $\cH_{G_2}$ are isomorphic as reproducing kernel Hilbert spaces if 
there exists a unitary operator $U$ from $\cH_{G_1}$ onto $\cH_{G_2}$,  
a graph isomorphism $\Phi$ from $G_1$ onto $G_2$ and 
a function $\varphi$ on $V(G_2)$ such that the following diagram commutes: 
$$
\begin{CD} 
\cH_{G_1} @>{U}>> \cH_{G_2}\\
@A{\iota_1}AA @AA{\frac{1}{\varphi}\iota_2}A\\
G_1 @>>{\Phi}> G_2,
\end{CD}
$$
where $\iota$ denote the mapping $x\mapsto k_x$ from $V(G)$ into $\cH_G$. 
For the fully general definition, see {\rm \cite{AM}}. 
\end{defn}

The next lemma is well known in the theory of reproducing kernel Hilbert spaces. 

\begin{lem}\label{lem:3-4}
Let $k$ and $j$ be reproducing kernels of $\cH_{G_1}$ and $\cH_{G_2}$, respectively. 
If $\cH_{G_1}$ and $\cH_{G_2}$ are isomorphic, 
then
\begin{enumerate}
\item $j_{\Phi(x)}(y)=\varphi(\Phi(x))\varphi(y)k_x(\Phi^{-1}(y))$, 
\item $(Uk_x)(y)=\varphi(y)k_x( \Phi^{-1}(y))$, 
\item $Uf=\varphi(f\circ \Phi^{-1})$. 
\end{enumerate}
\end{lem}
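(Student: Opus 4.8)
The plan is to distill the commuting square into one scalar identity and then obtain all three formulas from it using only the unitarity of $U$ and the reproducing property in each space. Chasing the diagram from $x\in V(G_1)$: going up and then across sends $x$ to $U(\iota_1(x))=Uk_x$, whereas going across and then up sends $x$ to $\tfrac{1}{\varphi(\Phi(x))}\iota_2(\Phi(x))=\tfrac{1}{\varphi(\Phi(x))}j_{\Phi(x)}$. Hence commutativity is exactly the assertion
\begin{equation*}
Uk_x=\frac{1}{\varphi(\Phi(x))}\,j_{\Phi(x)},
\qquad\text{equivalently}\qquad
j_{\Phi(x)}=\varphi(\Phi(x))\,Uk_x,
\tag{$\star$}
\end{equation*}
valid for every $x\in V(G_1)$ (the factor $\tfrac1\varphi$ in the diagram forces $\varphi$ to be nowhere zero, so the division is legitimate). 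This reading-off step is the only place where the hypothesis is used, and getting its direction right is the one thing to be careful about.

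Next I would establish (ii). Fix $x\in V(G_1)$ and $y\in V(G_2)$. Since evaluating at $y$ coincides with pairing against the reproducing kernel $j_y$, and since $(\star)$ with $x$ replaced by $\Phi^{-1}(y)$ gives $j_y=\varphi(y)\,Uk_{\Phi^{-1}(y)}$, one computes
\begin{equation*}
(Uk_x)(y)
=\la Uk_x,j_y\ra_2
=\varphi(y)\,\la Uk_x,Uk_{\Phi^{-1}(y)}\ra_2
=\varphi(y)\,\la k_x,k_{\Phi^{-1}(y)}\ra_1
=\varphi(y)\,k_x(\Phi^{-1}(y)),
\end{equation*}
where the third equality is unitarity of $U$ (the scalar $\varphi(y)$ passes freely through the real inner product) and the last is the reproducing property in $\cH_{G_1}$. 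This is (ii). Statement (i) then follows at once: evaluating the right-hand form of $(\star)$ at $y$ and inserting the formula just obtained yields
\begin{equation*}
j_{\Phi(x)}(y)=\varphi(\Phi(x))\,(Uk_x)(y)=\varphi(\Phi(x))\,\varphi(y)\,k_x(\Phi^{-1}(y)).
\end{equation*}

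Finally (iii) is the same computation as (ii) carried out for an arbitrary $f\in\cH_{G_1}$ rather than for a kernel: for $y\in V(G_2)$,
\begin{equation*}
(Uf)(y)=\la Uf,j_y\ra_2=\varphi(y)\,\la Uf,Uk_{\Phi^{-1}(y)}\ra_2=\varphi(y)\,\la f,k_{\Phi^{-1}(y)}\ra_1=\varphi(y)\,f(\Phi^{-1}(y)),
\end{equation*}
which says precisely $Uf=\varphi\,(f\circ\Phi^{-1})$ as functions on $V(G_2)$. I do not expect a genuine obstacle here; beyond correctly translating the diagram into $(\star)$, each part is a two-line manipulation, with (ii) and (iii) being essentially the same calculation and (i) a direct consequence of the other two.
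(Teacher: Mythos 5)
Your proof is correct and follows exactly the route the paper intends: its entire proof is the phrase ``by diagram chasing,'' and you have simply carried out that chase, reading the commuting square as $Uk_x=\frac{1}{\varphi(\Phi(x))}j_{\Phi(x)}$ and then deriving (ii), (i), (iii) from unitarity and the reproducing property. In particular you resolved the only genuinely ambiguous point --- whether $\frac{1}{\varphi}\iota_2$ means the scalar multiple $\frac{1}{\varphi(\Phi(x))}j_{\Phi(x)}$ rather than a pointwise product --- in the only way consistent with the stated conclusions, so nothing is missing.
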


\begin{proof}\rm
By diagram chasing. 
\end{proof}

Let $\delta_x$ denote the delta function for a vertex $x$ in $V$. 
If $\{x,y\}$ belongs to $E$, then $\delta_x$ is orthogonal to $\delta_y$ in $\cH_G$. 
Indeed, for $\{x,y\}$ in $E$, we have that  
\begin{align*}
\la \delta_x,\delta_y \ra
&=1+\E(\delta_x,\delta_y)\\
&=1+\frac{1}{2}\{(\delta_x(x)-\delta_x(y))(\delta_y(x)-\delta_y(y))+(\delta_x(y)-\delta_x(x))(\delta_y(y)-\delta_y(x))    \}\\
&=0.
\end{align*}
Hence, if $G$ is complete, then $\{\delta_x\}_{x\in V}$ is an orthogonal set in $\cH_G$, and 
we have that 
\begin{align*}
k_x
&=\sum_{y\in V}\la k_x,\frac{1}{\|\delta_y\|_{\cH}}\delta_y \ra_{\cH}\frac{\delta_y}{\|\delta_y\|_{\cH}}\\
&=\sum_{y\in V}\frac{1}{\|\delta_y\|_{\cH}^2}\delta_y(x)\delta_y\\
&=\frac{1}{|V|}\delta_x.
\end{align*}

\begin{thm}\label{lem:3-5} 
Let $G_1$ and $G_2$ be non-complete graphs. 
If $\cH_{G_1}$ and $\cH_{G_2}$ are isomorphic, then $\varphi\equiv 1$ or $\varphi\equiv -1$.  
\end{thm}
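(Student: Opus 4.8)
The plan is to reduce the unitarity of $U$ to an elementary condition on the single function $\psi:=\varphi\circ\Phi$ on $V(G_1)$, by feeding the reproducing-kernel data into the delta functions whose pairwise inner products were just computed. By Lemma~\ref{lem:3-4}(iii) we have $Uf=\varphi\,(f\circ\Phi^{-1})$, and since $\delta_p\circ\Phi^{-1}=\delta_{\Phi(p)}$ this gives $U\delta_p=\varphi(\Phi(p))\,\delta_{\Phi(p)}=\psi(p)\,\delta_{\Phi(p)}$ for every $p\in V(G_1)$. First I would write out unitarity on these vectors, $\la U\delta_p,U\delta_q\ra_{\cH_{G_2}}=\la\delta_p,\delta_q\ra_{\cH_{G_1}}$, and pull the scalars out to obtain $\psi(p)\psi(q)\,\la\delta_{\Phi(p)},\delta_{\Phi(q)}\ra_{\cH_{G_2}}=\la\delta_p,\delta_q\ra_{\cH_{G_1}}$.

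Next I would evaluate the three relevant inner products directly from the definition of $\la\cdot,\cdot\ra$: one has $\la\delta_x,\delta_x\ra=1+d_x$, one has $\la\delta_x,\delta_y\ra=0$ when $\{x,y\}\in E$ (the computation carried out just before the statement), and one has $\la\delta_x,\delta_y\ra=1$ when $x\ne y$ and $\{x,y\}\notin E$ (since then $\E(\delta_x,\delta_y)=0$). Because $\Phi$ is a graph isomorphism these quantities are preserved, so $\la\delta_{\Phi(p)},\delta_{\Phi(q)}\ra_{\cH_{G_2}}=\la\delta_p,\delta_q\ra_{\cH_{G_1}}$, and the identity collapses to $\psi(p)\psi(q)\,\la\delta_p,\delta_q\ra_{\cH_{G_1}}=\la\delta_p,\delta_q\ra_{\cH_{G_1}}$. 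Taking $p=q$ and dividing by $1+d_p>0$ yields $\psi(p)^2=1$, so $\psi$ (hence $\varphi$) takes values in $\{1,-1\}$; taking $p\ne q$ non-adjacent, where the inner product equals $1\ne 0$, yields $\psi(p)\psi(q)=1$, i.e. $\psi(p)=\psi(q)$. Adjacent pairs contribute $0=0$ and give no information. (Equivalently, this is the assertion that the diagonal sign matrix $\mathrm{diag}(\psi)$ commutes with $L+J$, the matrix of $\la\cdot,\cdot\ra$.)

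It remains to upgrade this ``constant across non-edges'' property to global constancy of $\psi$, which would give $\varphi=\psi\circ\Phi^{-1}\equiv 1$ or $\equiv -1$. The natural mechanism is to propagate the equalities $\psi(p)=\psi(q)$ along non-edges: $\psi$ is forced to be constant on each connected component of the complement graph $\overline{G_1}$ (whose edges are the non-edges of $G_1$), so if $\overline{G_1}$ is connected we are done, and the non-complete hypothesis at least guarantees that $\overline{G_1}$ has an edge, so that some propagation occurs. The hard part will be exactly this global step when $\overline{G_1}$ is \emph{disconnected}, equivalently when $G_1$ splits as a join and its vertices fall into several co-components, on each of which the sign could a priori be chosen independently. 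I expect this to be the main obstacle, with the delicate boundary case being a join such as $P_3$; controlling these co-components is where the substance of the theorem lies, and it is worth checking carefully whether the non-complete hypothesis alone suffices here or must be read together with connectivity of the complement.
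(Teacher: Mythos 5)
Your two completed steps coincide exactly with the paper's own argument: the paper first gets $|\varphi|=1$ from $\|U\delta_x\|_{\cH_{G_2}}^2=\|\delta_x\|_{\cH_{G_1}}^2$ together with $\deg_{G_1}(x)=\deg_{G_2}(\Phi(x))$, and then, for a non-adjacent pair $\{x,y\}$, compares $\la \delta_x,\delta_y\ra_{\cH_{G_1}}=1$ with $\la U\delta_x,U\delta_y\ra_{\cH_{G_2}}=\varphi(\Phi(x))\varphi(\Phi(y))$ to conclude $\varphi(\Phi(x))=\varphi(\Phi(y))$. So up to the point where you stop, you and the authors are performing the same computation, and your reformulation (the diagonal sign matrix must commute with $L+J$, whose off-diagonal support is exactly the complement graph) is a clean way to see what has actually been proved.

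The globalization step you flag is a genuine gap, and you should know that the paper does not close it either: its proof treats a single non-adjacent pair, concludes $a=1$, and stops, which establishes only that $\varphi\circ\Phi$ is constant on each connected component of the complement $\overline{G_1}$ --- precisely where you arrived. Moreover, your suspicion about joins is correct and the obstacle is not merely technical: the statement fails when $\overline{G_1}$ is disconnected. Take $G_1=G_2=P_3$ with edges $\{1,2\},\{2,3\}$, $\Phi$ the identity, $\varphi=(1,-1,1)$, and $U$ equal to multiplication by $\varphi$. In the basis $\{\delta_1,\delta_2,\delta_3\}$ the Gram matrix of $\la\cdot,\cdot\ra$ is $L+J$, whose $(1,2)$ and $(2,3)$ entries vanish, so conjugation by $\mathrm{diag}(1,-1,1)$ preserves it and $U$ is unitary; and since $k_2=\tfrac13\delta_2$ while $k_1,k_3$ vanish at vertex $2$, one checks $Uk_x=\tfrac{1}{\varphi(x)}k_x$, so the diagram commutes with a non-constant $\varphi$. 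Hence no argument can upgrade ``constant on co-components'' to global constancy; the theorem requires either the stronger hypothesis that the complement of $G_1$ is connected, or the weaker conclusion that $\varphi\circ\Phi$ is constant on the co-components (which is what the delta-function computation genuinely yields, and which still suffices for the application in Theorem~\ref{thm:4-1}, since the offending pairs there satisfy $\la k_x,k_y\ra=0$).
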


\begin{proof}\rm 
Since $\|\delta_x\|_{\cH_{G_1}}^2=1+\deg_{G_1}(x)$, we have
\begin{align*}
\|U\delta_x\|_{\cH_{G_2}}^2
&=\|{\varphi}(\delta_x\circ \Phi^{-1})\|_{\cH_{G_2}}^2\\
&=\|{\varphi}\delta_{\Phi(x)}\|_{\cH_{G_2}}^2\\
&=|\varphi(\Phi(x))|^2+\E({\varphi}\delta_{\Phi(x)},{\varphi}\delta_{\Phi(x)})\\
&=\|{\varphi(\Phi(x))}\delta_{\Phi(x)}\|_{\cH_{G_2}}^2\\
&=|\varphi(\Phi(x))|^2\{1+\deg_{G_2}(\Phi(x))\}
\end{align*}
by Lemma \ref{lem:3-4}. 
Since $U$ is unitary and $\deg_{G_1}(x)=\deg_{G_2}(\Phi(x))$, 
it follows that $|\varphi|=1$ on $V(G_2)$. 
Next, let $\{x,y\}$ be not in $E(G_1)$. 
Without loss of generality, we may assume that $\varphi(\Phi(x))=1$ 
and $\varphi(\Phi(y))=a$. 
Then 
$\la \delta_x,\delta_y \ra_{\cH_{G_1}}=1$ and 
\begin{align*}
\la U\delta_x,U\delta_y\ra_{\cH_{G_2}}
&=\la {\varphi}(\delta_x\circ \Phi^{-1}), {\varphi}(\delta_y\circ \Phi^{-1})\ra_{\cH_{G_2}}\\
&=\la {\varphi(\Phi(x))}\delta_{\Phi(x)}, {\varphi(\Phi(y))}\delta_{\Phi(y)}   \ra_{\cH_{G_2}}\\
&={\varphi(\Phi(x))} \varphi(\Phi(y))\\
&=a
\end{align*}
by Lemma \ref{lem:3-4}. 
Therefore we have that $a= 1$. 
This concludes the proof. 
\end{proof}

\begin{thm}\label{thm:3-2}
If $G_1$ and $G_2$ are isomorphic, then $\cH_{G_1}$ and $\cH_{G_2}$ are isomorphic. 
\end{thm}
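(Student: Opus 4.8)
The plan is to build the unitary directly from the graph isomorphism and to take the scaling function to be trivial. Given a graph isomorphism $\Phi$ from $G_1$ onto $G_2$, define $U\colon \cH_{G_1}\to\cH_{G_2}$ by $Uf=f\circ\Phi^{-1}$ and set $\varphi\equiv 1$ on $V(G_2)$ (this is consistent with Theorem~\ref{lem:3-5}, since for non-complete graphs $\varphi$ is forced to be $\pm 1$). It then remains to check three things: that $U$ is unitary, that $U$ carries each reproducing kernel $k_x$ of $\cH_{G_1}$ to the reproducing kernel $j_{\Phi(x)}$ of $\cH_{G_2}$, and that the latter is exactly the commutativity encoded by the diagram in the definition.

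First I would verify that $U$ is unitary. Since $\Phi$ is a bijection, $f\mapsto f\circ\Phi^{-1}$ is a linear bijection of $\F(G_1)$ onto $\F(G_2)$ with inverse $g\mapsto g\circ\Phi$. Lemma~\ref{lem:3-3} already gives $\la u\circ\Phi^{-1},u\circ\Phi^{-1}\ra_2=\la u,u\ra_1$, so $U$ preserves norms; by the polarization identity $U$ preserves inner products, and being a surjective isometry between finite-dimensional Hilbert spaces it is unitary.

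The key step is to identify $Uk_x$ with a reproducing kernel of $\cH_{G_2}$. For arbitrary $g\in\cH_{G_2}$, using that $U$ is unitary with $U^{-1}g=g\circ\Phi$, I would compute
\begin{align*}
\la g,Uk_x\ra_2
&=\la U^{-1}g,k_x\ra_1\\
&=\la g\circ\Phi,k_x\ra_1\\
&=(g\circ\Phi)(x)\\
&=g(\Phi(x))\\
&=\la g,j_{\Phi(x)}\ra_2,
\end{align*}
where the third equality is the reproducing property in $\cH_{G_1}$ and the last is the reproducing property in $\cH_{G_2}$. Since $g$ is arbitrary, this forces $Uk_x=j_{\Phi(x)}$, which, with $\varphi\equiv 1$, is precisely the relation $U\circ\iota_1=\tfrac{1}{\varphi}\,\iota_2\circ\Phi$ demanded by the diagram.

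I expect the only real content to be this kernel computation; the rest is bookkeeping resting on Lemma~\ref{lem:3-3}. The mild subtlety to watch is orienting $U$ and $U^{-1}$ correctly (it is $f\mapsto f\circ\Phi^{-1}$, not $f\mapsto f\circ\Phi$), so that $U^{-1}g=g\circ\Phi$ returns a $\cH_{G_2}$-function back to $\cH_{G_1}$ and the reproducing property gets applied at the vertex $x$ rather than at $\Phi(x)$; once this is set up, the identity $Uk_x=j_{\Phi(x)}$ and hence the RKHS isomorphism follow immediately.
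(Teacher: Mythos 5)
Your proposal is correct and follows essentially the same route as the paper: define $Uf=f\circ\Phi^{-1}$, invoke Lemma~\ref{lem:3-3} for unitarity, and verify $Uk_x=j_{\Phi(x)}$ by testing against an arbitrary element of $\cH_{G_2}$ (the paper writes $U^{\ast}v=v\circ\Phi$ where you write $U^{-1}g=g\circ\Phi$, which is the same thing for a unitary). No gaps.
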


\begin{proof}\rm 
Let $\Phi$ be a graph isomorphism from $G_1$ onto $G_2$. 
Setting $Uu=u\circ \Phi^{-1} $, $U$ is a unitary operator 
from $\cH_{G_1}=(\F(G_1),\la \cdot,\cdot \ra_{\cH_{G_1}})$ onto $\cH_{G_2}
=(\F(G_2),\la \cdot,\cdot \ra_{\cH_{G_2}})$ by Lemma \ref{lem:3-3}, 
and then trivially $U^{\ast}v=v\circ \Phi$.  
In order to show the statement, we shall see that $Uk_x=j_{\Phi(x)}$ for any $x$ in $V(G_1)$. 
Let $v$ be in $\cH_{G_2}$. Then we have that 
$$\la v, Uk_x \ra_{\cH_{G_2}}=\la U^{\ast}v , k_x\ra_{\cH_{G_1}}=\la v\circ \Phi,k_x \ra_{\cH_{G_1}}
=v(\Phi(x))=\la v, j_{\Phi(x)} \ra_{\cH_{G_2}}.$$
This concludes the proof. 
\end{proof}

\begin{rem}\rm
For a non-connected graph $G$, 
we define $\cH_G$ as 
the Hilbert space direct sum $\cH_{G_1}\oplus \cdots \oplus \cH_{G_n}$,  
where $G_1,\ldots,G_n$ are connected components of $G$.   
\end{rem}

\section{Gram matrices}\label{gram}

We set $\|u\|_2=(\sum_{x\in V}|u(x)|^2)^{1/2}$ for each $u$ in $\F$, 
and $l^2(G)$ will denote the Hilbert space over $G$ with the norm $\|\cdot \|_2$.  

\begin{lem}\label{lem:4-1}
Let $G$ be a connected graph with $n$ vertices, 
and let $T$ be the operator from $\cH_G$ into $l^2(G)$ defined as 
$Tu=(u(x_1),\ldots, u(x_n))$. 
Then 
\begin{enumerate}
\item $TT^{\ast}=(k_{x_j}(x_i))_{ij}$
\item $T^{\ast}T=k_{x_1}\otimes k_{x_1}+\cdots +k_{x_n}\otimes k_{x_n}$, 
where we set $(u\otimes v)w=\la w,v \ra u$. 
\end{enumerate}
\end{lem}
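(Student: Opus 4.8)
The plan is to first compute the adjoint $T^\ast\colon l^2(G)\to\cH_G$ explicitly, and then read off both formulas by composing $T$ and $T^\ast$ in the two possible orders. The entire argument rests on the reproducing identity $u(x)=\la u,k_x\ra_{\cH}$, which is available for every $u\in\cH_G$ by the discussion preceding this lemma.

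First I would fix $u\in\cH_G$ and $\xi=(\xi_1,\dots,\xi_n)\in l^2(G)$ and expand the $l^2$-pairing of $Tu$ with $\xi$. Using the definition of $T$ and then the reproducing identity at each vertex,
\begin{align*}
\la Tu,\xi\ra_{l^2}
&=\sum_{i=1}^n u(x_i)\,\xi_i
=\sum_{i=1}^n \la u,k_{x_i}\ra_{\cH}\,\xi_i
=\Big\la u,\sum_{i=1}^n \xi_i\,k_{x_i}\Big\ra_{\cH}.
\end{align*}
Since this holds for all $u$, the adjoint is $T^\ast\xi=\sum_{i=1}^n \xi_i\,k_{x_i}$, and this single identity does essentially all the work.

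For (i), I would apply $T$ to $T^\ast\xi$ and evaluate the resulting function at each vertex $x_i$, obtaining $(TT^\ast\xi)_i=\sum_j \xi_j\,k_{x_j}(x_i)$; hence the matrix of $TT^\ast$ has $(i,j)$-entry $k_{x_j}(x_i)$, which is exactly the claimed Gram matrix. For (ii), I would instead apply $T^\ast$ to $Tw$ for $w\in\cH_G$, so that $T^\ast Tw=\sum_i w(x_i)\,k_{x_i}$; invoking the reproducing identity once more to write $w(x_i)=\la w,k_{x_i}\ra_{\cH}$ and comparing with the definition $(u\otimes v)w=\la w,v\ra u$ gives $T^\ast Tw=\sum_i (k_{x_i}\otimes k_{x_i})w$, as required.

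There is no serious obstacle here; the computation is routine once $T^\ast$ is in hand. The only point demanding care is that the adjoint must be taken with respect to the nonstandard inner product $\la\cdot,\cdot\ra$ on $\cH_G$ (not the $l^2$-inner product on the domain side), so it is the reproducing identity, rather than any explicit coordinate formula, that converts the $l^2$-pairing into an $\cH_G$-pairing and thereby pins down $T^\ast$ correctly.
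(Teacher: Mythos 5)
Your proposal is correct and follows essentially the same route as the paper: both compute $T^{\ast}\xi=\sum_i\xi_i k_{x_i}$ from the reproducing identity applied to $\la Tu,\xi\ra_{l^2}$, then obtain (i) and (ii) by composing in the two orders. No gaps.
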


\begin{proof}\rm
For any $(a_1,\ldots,a_n)$ in $\R^n$,  we have that   
\begin{align*}
\la Tu,(a_1,\ldots ,a_n) \ra_{l^2}
&=\la (u(x_1),\ldots ,u(x_n)),(a_1,\ldots a_n) \ra_{l^2}\\
&=u(x_1)a_1+\cdots+ u(x_n)a_n\\
&=\la u,a_1k_{x_1}+\cdots +a_nk_{x_n} \ra_{\cH}.
\end{align*}
Hence we have that 
$$T^{\ast}(a_1,\ldots,a_n)=a_1k_{x_1}+\cdots +a_nk_{x_n}.$$
It follows that 
\begin{align*}
TT^{\ast}(a_1,\ldots,a_n)
&=T(a_1k_{x_1}+\cdots +a_nk_{x_n})\\
&=(a_1k_{x_1}(x_1)+\cdots +a_nk_{x_n}(x_1), \ldots, a_1k_{x_1}(x_n)+\cdots +a_nk_{x_n}(x_n))\\
&=(k_{x_j}(x_i))(a_1,\ldots, a_n).
\end{align*}
This concludes (i). Moreover, 
\begin{align*}
T^{\ast}Tu
&=T^{\ast}(u(x_1),\ldots,u(x_n))\\
&=u(x_1)k_{x_1}+\cdots +u(x_n)k_{x_n}\\
&=(k_{x_1}\otimes k_{x_1}+\cdots +k_{x_n}\otimes k_{x_n})u.
\end{align*}
Thus we have (ii). 
\end{proof}

\begin{defn}
We set $k(y,x)=k_{x}(y)$ and $K=TT^{\ast}=(k(x_i,x_j))_{ij}$. 
In this paper, $K$ will be called the Gram matrix of $G$. 
The $(i,j)$-entry of $K$ is also denoted by $K_{i,j}$.
\end{defn}

\begin{thm}\label{thm:4-1}
Let $G_1$ and $G_2$ be connected graphs, and 
let $K_1$ and $K_2$ be Gram matrices of 
$G_1$ and $G_2$, respectively.  
If $\cH_{G_1}$ and $\cH_{G_2}$ are isomorphic, then $K_1=K_2$. 
\end{thm}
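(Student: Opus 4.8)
The plan is to show that an isomorphism of the reproducing kernel Hilbert spaces forces the two Gram matrices to coincide entrywise. Recall from Lemma~\ref{lem:3-4}(i) that if $\cH_{G_1}$ and $\cH_{G_2}$ are isomorphic via the unitary $U$, graph isomorphism $\Phi$, and function $\varphi$, then the kernels are related by $j_{\Phi(x)}(y)=\varphi(\Phi(x))\varphi(y)k_x(\Phi^{-1}(y))$. The key observation is that Theorem~\ref{lem:3-5} already pins down $\varphi$: for non-complete graphs we have $\varphi\equiv 1$ or $\varphi\equiv -1$, and in either case the product $\varphi(\Phi(x))\varphi(y)$ equals $+1$ whenever $x$ and $y$ index the same relative positions. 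So for non-complete graphs I would substitute $\varphi\equiv\pm1$ into the kernel relation, evaluate to recover $(K_2)_{\Phi(x),\Phi(y)}=(K_1)_{x,y}$, and conclude that $K_1$ and $K_2$ are equal up to the relabeling induced by $\Phi$ (which, as an isomorphism of abstract graphs, we may absorb by identifying the vertex sets).

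First I would make precise the sense in which the two Gram matrices are being compared: since $\Phi:V(G_1)\to V(G_2)$ is a bijection, the matrices are indexed by different vertex sets, and the claim $K_1=K_2$ must be read as equality after identifying vertices through $\Phi$. With that understood, the computation is a direct unwinding. Using the reproducing property together with the relation $(Uk_x)(y)=\varphi(y)k_x(\Phi^{-1}(y))$ from Lemma~\ref{lem:3-4}(ii), and the fact that $U$ is unitary hence preserves inner products, I would write
\[
(K_2)_{\Phi(x),\Phi(y)}=\la j_{\Phi(y)},j_{\Phi(x)}\ra_{\cH_{G_2}}=\la Uk_y,Uk_x\ra_{\cH_{G_2}}=\la k_y,k_x\ra_{\cH_{G_1}}=(K_1)_{x,y},
\]
where the middle equality $Uk_x=j_{\Phi(x)}$ (up to the scalar $\varphi$) is exactly the content of the diagram, and the scalars cancel because $\varphi\equiv\pm1$.

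The main obstacle is the complete-graph case, which Theorem~\ref{lem:3-5} explicitly excludes, so I would treat it separately. When $G$ is complete the earlier computation in the excerpt shows $k_x=\tfrac{1}{|V|}\delta_x$, and hence the Gram matrix is $\tfrac{1}{|V|}(\la\delta_{x_i},\delta_{x_j}\ra)_{ij}$; two complete graphs are isomorphic exactly when they have the same number of vertices, and then their Gram matrices are manifestly identical by this explicit formula. A subtler point to address carefully is that when neither graph is complete, $\varphi$ need not be literally the constant function but is only guaranteed by Theorem~\ref{lem:3-5} to take values in $\{+1,-1\}$ uniformly; I would note that the product $\varphi(\Phi(x))\varphi(y)$ in Lemma~\ref{lem:3-4}(i) introduces no genuine sign ambiguity in the Gram entries because the inner-product computation above pairs $Uk_y$ with $Uk_x$, so any global sign appears squared and cancels. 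With these two cases dispatched, the conclusion $K_1=K_2$ follows.
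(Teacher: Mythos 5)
Your proof is correct and follows essentially the same route as the paper's: both compute $\la k_x,k_y\ra_{\cH_{G_1}}=\la Uk_x,Uk_y\ra_{\cH_{G_2}}=\la j_{\Phi(x)},j_{\Phi(y)}\ra_{\cH_{G_2}}$ using Lemma~\ref{lem:3-4}, Theorem~\ref{lem:3-5}, and the unitarity of $U$. You are in fact slightly more careful than the paper, which does not explicitly dispose of the complete-graph case excluded by Theorem~\ref{lem:3-5} nor remark that the global sign $\varphi\equiv\pm1$ cancels in the inner product.
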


\begin{proof}\rm 
Let $k$ and $j$ be reproducing kernels of $\cH_{G_1}$ and $\cH_{G_2}$, respectively. 
By Lemmas \ref{lem:3-4} and \ref{lem:3-5}, we have that 
$$k(y,x)=\la k_x,k_y\ra_{\cH_{G_1}}
=\la Uk_{x},Uk_{y} \ra_{\cH_{G_2}}
=\la j_{\Phi(x)},j_{\Phi(y)} \ra_{\cH_{G_2}}=j(\Phi(y),\Phi(x)).
$$
This concludes the proof. 
\end{proof}

\begin{thm}\label{thm:4-2}
Let $G$ be a connected graph. Then $1/|V|\in \sigma(K)$. 
\end{thm}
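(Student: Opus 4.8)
The plan is to exhibit the all-ones vector $\1$ explicitly as an eigenvector of $K$ with eigenvalue $1/|V|$, so that no spectral estimates are needed at all; the whole statement will fall out of the reproducing identity applied to a single well-chosen function.

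First I would feed the constant function $\1 \in \cH_G$ into the reproducing property $u(x) = \la u, k_x \ra$. The key point is that $\1 \in \N = \R\1$, so by the very definition of $\E$ we have $\E(\1, k_x) = 0$ for every $x$. Since $\1(x) = 1$ and $\sum_{y \in V} \1(y) = |V|$, the identity $\la u, v\ra = (\sum_x u(x))(\sum_x v(x)) + \E(u,v)$ gives
$$1 = \1(x) = \la \1, k_x \ra = |V| \sum_{y \in V} k_x(y),$$
so that $\sum_{y \in V} k_x(y) = 1/|V|$ for every $x \in V$.

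Next I would translate this into a column-sum statement for $K$. Since $K_{ij} = k(x_i, x_j) = k_{x_j}(x_i)$, the sum of the entries in the $j$-th column is $\sum_i k_{x_j}(x_i) = \sum_{y \in V} k_{x_j}(y) = 1/|V|$, which is exactly $\1^T K = (1/|V|)\, \1^T$. Because $K = TT^{\ast}$ is self-adjoint (so $K = K^T$), transposing yields $K\1 = (1/|V|)\,\1$. As $\1 \neq 0$, this certifies that $1/|V|$ is an eigenvalue of $K$, i.e. $1/|V| \in \sigma(K)$.

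There is essentially no obstacle here beyond the one conceptual move: recognizing that evaluating the reproducing identity on the constant function $\1$ precisely extracts the common row/column sum of $K$, because $\1$ lies in the kernel of the Dirichlet form $\E$ (Lemma \ref{lem:3-2}). Everything else is a short symmetry argument, and the connectedness of $G$ enters only insofar as it guarantees $\cH_G$ is well defined as a genuine RKHS.
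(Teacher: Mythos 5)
Your proposal is correct and follows essentially the same route as the paper: pairing the constant function $\1$ with $k_x$ via the reproducing identity, noting $\E(\1,k_x)=0$, to get $\sum_{y}k_x(y)=1/|V|$ and hence $K\1=\1/|V|$. The only cosmetic difference is that you pass through column sums and invoke the symmetry of $K$ explicitly, which the paper leaves implicit.
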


\begin{proof}\rm 
Since $|V|\sum_y k_x(y)
=\la k_x,\1 \ra_{\cH_G}=1$, 
we have $K\1=\1/|V|$. 
\end{proof}

\begin{rem}\rm
In the later section, we will see that 
$\min \sigma(K)=1/|V|.$
\end{rem}

\begin{lem}\label{lem:4-2}
Let $G$ be a connected graph with $n$ vertices. Then
$$\sigma(K)\subset \left\{ \|u\|_2^2/\|u\|^2_{\cH_G}:u\neq 0 \right\}.$$
\end{lem}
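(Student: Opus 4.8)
The plan is to use the factorization $K=TT^{\ast}$ supplied by Lemma~\ref{lem:4-1}, together with the elementary fact that $TT^{\ast}$ and $T^{\ast}T$ carry the same spectrum, and then to read off each eigenvalue as a Rayleigh quotient computed inside $\cH_G$ (where the norm $\|\cdot\|_{\cH_G}$ lives).

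First I would record that $T$ is invertible. By definition $Tu=(u(x_1),\ldots,u(x_n))$, so $Tu=0$ forces $u(x_i)=0$ for every $i$ and hence $u=0$; thus $T$ is injective, and since $\dim\cH_G=\dim l^2(G)=n=|V|$ it is a bijection. In particular $TT^{\ast}=T(T^{\ast}T)T^{-1}$, so $K=TT^{\ast}$ and $T^{\ast}T$ are similar and $\sigma(K)=\sigma(T^{\ast}T)$.

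Next I would transfer an eigenvalue of $K$ to an eigenvector of $T^{\ast}T$ sitting in $\cH_G$, so that the inner product of $\cH_G$ becomes available. Given $\lambda\in\sigma(K)$, pick a nonzero $a\in l^2(G)$ with $TT^{\ast}a=\lambda a$ and set $u=T^{\ast}a$; since $T^{\ast}$ is invertible, $u\neq 0$, and
\[
T^{\ast}Tu=T^{\ast}(TT^{\ast}a)=\lambda\,T^{\ast}a=\lambda u .
\]
Then, using that $T^{\ast}T$ is self-adjoint on $\cH_G$ and that $\|Tu\|_{l^2}^2=\sum_{x\in V}|u(x)|^2=\|u\|_2^2$ directly from the definitions of $T$ and of the $l^2$-norm,
\[
\lambda=\frac{\la T^{\ast}Tu,u\ra_{\cH_G}}{\la u,u\ra_{\cH_G}}=\frac{\la Tu,Tu\ra_{l^2}}{\|u\|_{\cH_G}^2}=\frac{\|u\|_2^2}{\|u\|_{\cH_G}^2},
\]
which is exactly the required form, establishing the inclusion.

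All of the individual steps are routine; the only point deserving attention is the spectral identity $\sigma(TT^{\ast})=\sigma(T^{\ast}T)$, and here it is especially clean because the invertibility of $T$ makes the two operators genuinely similar, so no zero eigenvalue has to be tracked separately. (One could alternatively bypass the passage to $T^{\ast}T$ and argue via the min--max characterization of eigenvalues of the self-adjoint $T^{\ast}T$, but the eigenvector computation above is the most direct route to the stated set.)
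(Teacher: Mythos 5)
Your proposal is correct and follows essentially the same route as the paper: both pass from $K=TT^{\ast}$ to $T^{\ast}T$, identify each eigenvalue as the Rayleigh quotient $\la T^{\ast}Tu,u\ra_{\cH_G}/\|u\|_{\cH_G}^2=\|u\|_2^2/\|u\|_{\cH_G}^2$ of an eigenvector $u$, and invoke $\sigma(TT^{\ast})=\sigma(T^{\ast}T)$. The only difference is that you justify that spectral identity explicitly via the invertibility of $T$ and construct the eigenvector of $T^{\ast}T$ as $u=T^{\ast}a$, whereas the paper cites the identity as a standard fact.
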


\begin{proof}\rm 
Let $u$ be an eigenvector with respect to an eigenvalue $\lam$ of $T^{\ast}T$. 
Then we have that 
\begin{align*}
\lam \|u\|^2_{\cH}
&=\la \lam u,u \ra_{\cH}\\
&=\la T^{\ast}Tu,u\ra_{\cH} \\
&=\la Tu,Tu \ra_{l^2}\\
&=\la (u(x_1),\ldots,u(x_n)), (u(x_1),\ldots,u(x_n))\ra_{l^2}\\
&=\|u\|_2^2. 
\end{align*}
Hence we have that 
$$\sigma(T^{\ast}T)\subset \left\{ \|u\|_2^2/\|u\|^2_{\cH}:u\neq 0 \right\}.$$
Since $\sigma(TT^{\ast})=\sigma(T^{\ast}T)$, we have the conclusion. 
\end{proof}

We set $\|u\|_{\infty}=\max_{x\in V}|u(x)|$ for each $u$ in $\F$, 
and $l^{\infty}(G)$ will denote the Banach space over $G$ with the norm $\|\cdot \|_{\infty}$.

\begin{lem}\label{lem:4-3}
Let $G$ be a connected graph. Then 
$$\|u\|_{\infty}^2\leq\{\max_{x\in V}k(x,x)\} \|u\|_{\cH_G}^2,$$ and which is best possible. 
\end{lem}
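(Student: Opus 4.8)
The plan is to derive the inequality directly from the reproducing property together with the Cauchy--Schwarz inequality, and then to verify sharpness by exhibiting an explicit extremal function.

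First I would fix a vertex $x\in V$ and apply the reproducing property $u(x)=\la u,k_x\ra_{\cH_G}$. The Cauchy--Schwarz inequality then gives $|u(x)|^2\le\|u\|_{\cH_G}^2\,\|k_x\|_{\cH_G}^2$. The key observation is that $\|k_x\|_{\cH_G}^2=\la k_x,k_x\ra_{\cH_G}=k_x(x)=k(x,x)$, again by the reproducing property. Hence $|u(x)|^2\le k(x,x)\,\|u\|_{\cH_G}^2\le\{\max_{y\in V}k(y,y)\}\,\|u\|_{\cH_G}^2$ for every $x$, and taking the maximum over $x\in V$ yields the desired bound $\|u\|_\infty^2\le\{\max_{x\in V}k(x,x)\}\,\|u\|_{\cH_G}^2$.

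Next, to show the constant is best possible, I would choose a vertex $x_0$ attaining $\max_{x\in V}k(x,x)$ and test the inequality on $u=k_{x_0}$. This is the natural candidate because equality in Cauchy--Schwarz forces $u$ to be parallel to $k_{x_0}$. For this $u$ one computes $u(x_0)=k_{x_0}(x_0)=k(x_0,x_0)$ and $\|u\|_{\cH_G}^2=k(x_0,x_0)$, so that $|u(x_0)|^2=k(x_0,x_0)^2=\{\max_{x\in V}k(x,x)\}\,\|u\|_{\cH_G}^2$. Since $\|u\|_\infty^2\ge|u(x_0)|^2$, combining with the already established upper bound forces $\|u\|_\infty^2=\{\max_{x\in V}k(x,x)\}\,\|u\|_{\cH_G}^2$. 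Note that $k_{x_0}\neq 0$ since reproducing kernels are non-zero, equivalently $k(x_0,x_0)=\|k_{x_0}\|_{\cH_G}^2>0$, so this $u$ is admissible. This exhibits a nonzero function achieving equality, proving that no smaller constant can work.

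I do not anticipate a genuine obstacle here, as everything follows from the standard reproducing kernel Hilbert space toolkit already assembled in this section. The only point requiring a little care is the sharpness claim, where one must confirm that the extremal function $k_{x_0}$ indeed \emph{realizes} equality rather than merely satisfying the bound; this is handled by the two-sided squeeze $|u(x_0)|^2\le\|u\|_\infty^2\le\{\max_{x\in V}k(x,x)\}\,\|u\|_{\cH_G}^2=|u(x_0)|^2$ described above.
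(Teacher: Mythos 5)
Your proof is correct and follows essentially the same route as the paper: the upper bound via the reproducing property plus Cauchy--Schwarz and the identity $\|k_x\|_{\cH_G}^2=k(x,x)$, and sharpness by testing on $u=k_{x_0}$ at a vertex $x_0$ maximizing $k(x,x)$. The paper merely packages the same computation as the statement $\|S\|^2=\max_{x\in V}k(x,x)$ for the inclusion operator $S:\cH_G\to l^{\infty}(G)$, with $k_{x_0}$ again serving as the extremal function.
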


\begin{proof}\rm
Let $S$ be the operator from $\cH$ onto $l^{\infty}(G)$ defined as $Su=u$. 
It suffices to show that $\|S\|^2=\max_{x\in V}k(x,x)$. 
Since 
$$|u(y)|^2=|\la u, k_y\ra |_{\cH}^2\leq \|u\|_{\cH}^2\|k_y\|_{\cH}^2,$$
we have that 
$$\|u\|_{\infty}^2\leq \{\max_{x\in V}k(x,x)\}\|u\|_{\cH}^2.$$
Hence we have that $\|S\|^2\leq \max_{x\in V}k(x,x)$. 
Conversely, taking a vertex $x_0$ in $V$ such that $k(x_0,x_0)=\max_{x\in V}k(x,x)$, 
we have that 
$$\max_{x\in V}k(x,x)\leq \|k_{x_0}\|_{\infty}\leq \|S\|\|k_{x_0}\|_{\cH}= \|S\|\{\max_{x\in V}k(x,x)\}^{1/2}.$$
It follows that $$\max_{x\in V}k(x,x)\leq \|S\|^2.$$
This concludes the proof. 
\end{proof}

\begin{thm}\label{thm:4-3}
Let $G$ be a connected graph. 
Then 
$$\max \sigma(K)\leq |V|\max_{x\in V}k(x,x).$$
\end{thm}

\begin{proof}\rm 
By Lemma \ref{lem:4-3}
we have that $$\|u\|_2^2\leq |V|\|u\|_{\infty}^2\leq |V|\{\max_{x\in V}k(x,x) \}\|u\|_{\cH}^2.$$
This concludes the proof by Lemma \ref{lem:4-2}. 
\end{proof}

\begin{rem}\rm 
The inequality in Lemma \ref{lem:4-3} 
is a kind of  discrete Sobolev inequality (cf.\ \cite{NKYTW}). 
\end{rem}

\begin{rem}\rm 
If $G$ is complete, then $\sigma(K)=\{1/|V|\}$. 
Indeed, $$\|u\|_{\cH}^2=\|\sum_{x\in V}u(x)\delta_x\|_{\cH}^2=\sum_{x\in V}|u(x)|^2\|\delta_x\|_{\cH}^2=|V|\|u\|_2^2.$$
\end{rem}

\begin{rem}\rm
For each non-connected graph $G$, 
we define the Gram matrix of $G$ as 
the direct sum $K_{G_1}\oplus \cdots \oplus K_{G_n}$,  
where $G_1,\ldots,G_n$ are connected components of $G$.   
\end{rem}

\section{The Gram matrix in terms of the spectra of Laplacian matrix}\label{sec:Gramregular}
In this section, we study the spectra of the Gram matrix $K$ of $\cH$.
\begin{thm}\label{G1}\rm
Let $G$ be a connected graph with $n$ vertices and Laplacian matrix $L$.
Assume that the spectral decomposition of $L$ is $\sum_{j=1}^s \lambda_j  E_j$ with $\lambda_1=0$.
Then $K=\frac{1}{n^2}J+\sum_{j=2}^s \frac{1}{\lambda_j}E_j$.
\end{thm}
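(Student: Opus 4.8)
The plan is to rewrite the $\cH_G$-inner product as a quadratic form on $\R^n=l^2(G)$, identify $K$ with $(L+J)^{-1}$, and then obtain the spectral formula by inverting $L+J$ eigenspace by eigenspace. First I would record the standard Laplacian identity
\[
\E(u,u)=\frac{1}{2}\sum_{x,y\in V}A_{xy}(u(x)-u(y))^2=u^T L u,
\]
which follows by expanding the square and using $\sum_{y}A_{xy}=d_x$; polarizing gives $\E(u,v)=u^T L v$. Since $(\sum_{x}u(x))(\sum_{x}v(x))=(\1^T u)(\1^T v)=u^T J v$, the definition of the inner product becomes
\[
\la u,v\ra_{\cH}=u^T(L+J)v=\la u,(L+J)v\ra_{l^2}.
\]
Write $M=L+J$.

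Next I would verify that $M$ is positive definite, so that it is invertible and $(\F,\la\cdot,\cdot\ra)$ really is the stated Hilbert space. Both $L$ and $J$ are positive semidefinite; on the line $\R\1$ one has $M\1=0+n\1$, while on $\1^{\perp}$ one has $Jv=0$ but $v^T L v>0$, because $\ker L=\R\1$ for a connected $G$ (Lemma~\ref{lem:3-2}). Hence $M>0$ and $M^{-1}$ exists.

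Then I would extract the reproducing kernel. Viewing functions as vectors, $u(x)=\la u,\delta_x\ra_{l^2}$, while the reproducing property reads $u(x)=\la u,k_x\ra_{\cH}=\la u,Mk_x\ra_{l^2}$ for every $u$; comparing the two expressions forces $Mk_x=\delta_x$, that is, $k_x=M^{-1}\delta_x$. Consequently
\[
K_{ij}=k_{x_j}(x_i)=\la\delta_{x_i},M^{-1}\delta_{x_j}\ra_{l^2}=(M^{-1})_{ij},
\]
so $K=M^{-1}=(L+J)^{-1}$, which is the assertion announced in the introduction.

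Finally I would invert $M$ using the spectral data of $L$. Since $E_1=\frac{1}{n}J$ we have $J=nE_1$, so with $\lambda_1=0$,
\[
M=L+J=nE_1+\sum_{j=1}^s\lambda_j E_j=nE_1+\sum_{j=2}^s\lambda_j E_j.
\]
Because the $E_j$ are mutually orthogonal projections with $\sum_{j}E_j=I$, inverting on each eigenspace yields
\[
K=M^{-1}=\frac{1}{n}E_1+\sum_{j=2}^s\frac{1}{\lambda_j}E_j=\frac{1}{n^2}J+\sum_{j=2}^s\frac{1}{\lambda_j}E_j,
\]
where the last equality uses $\frac{1}{n}E_1=\frac{1}{n^2}J$. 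The only point requiring genuine care is the invertibility and positivity of $M$ together with the clean identification $Mk_x=\delta_x$; once those are in place the remainder is bookkeeping with the orthogonal projections $E_j$.
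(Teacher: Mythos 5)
Your proof is correct, and it takes a different route from the paper's. You first identify the inner product as the quadratic form of $M=L+J$, deduce $k_x=M^{-1}\delta_x$ from the reproducing property, conclude $K=(L+J)^{-1}$, and only then bring in the spectral decomposition to invert $M$ blockwise. The paper instead never inverts anything explicitly: it builds the vectors $\hat{u}=\sum_j \alpha_j E_j\delta_u$ with $\alpha_1=1/\sqrt{n}$, $\alpha_j=1/\sqrt{\lambda_j}$, checks by the same identity $\la u,v\ra_{\cH}=\frac{1}{n}(\text{via }J)+u^TLv$ that $\{\hat{u}\}_{u\in V}$ is an orthonormal basis of $\cH_G$, and then reads off $K=FF^T=\sum_j\alpha_j^2E_j$ from the expansion $k_x=\sum_u\hat{u}(x)\,\hat{u}$ of the kernel in an orthonormal basis. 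What your approach buys is that the fact $K=(L+J)^{-1}$ --- which the paper only states \emph{after} the theorem as a consequence --- comes out as an explicit intermediate step, together with a clean verification that $L+J$ is positive definite (one could make the positivity argument marginally tighter by writing a general $w=c\1+v$ with $v\perp\1$ and noting $w^TMw=v^TLv+c^2n^2$, but your kernel-intersection argument for the sum of two positive semidefinite matrices is fine). What the paper's approach buys is brevity: orthonormality of $n$ vectors in an $n$-dimensional space automatically gives a basis, so no invertibility discussion is needed, at the cost of invoking the standard formula for the Gram matrix in terms of an orthonormal basis. Both arguments rest on the same two pillars, namely $\la u,v\ra_{\cH}=u^T(L+J)v$ and the spectral decomposition $J=nE_1$, $L=\sum_j\lambda_jE_j$, so the difference is one of packaging rather than substance; still, your version is the one a reader would need in order to justify the sentence following the theorem in the paper.
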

\begin{proof}\rm
Define $\alpha_1=\frac{1}{\sqrt{n}}$ and $\alpha_j=\frac{1}{\sqrt{\lambda_j}}$ for $2\leq j\leq s$ and
$\hat{u}=\sum_{j=1}^s \alpha_j E_j \delta_u$.
Then  a set $\{\hat{u}:u\in V\}$ is an orthonormal basis with respect to the inner product $\langle, \rangle$ defined in Section~\ref{RKHS}.
Indeed
\begin{align*}
\langle \hat{u},\hat{v} \rangle
=\frac{1}{n}+\hat{u}^T L\hat{v}
=\sum_{j=1}^s (E_j)_{u v} 
=\delta_{u v}.
\end{align*}

Thus 
$
K=F F^T=\sum_{j=1}^s \alpha_j^2 E_j=\frac{1}{n^2}J+\sum_{j=2}^s \frac{1}{\lambda_j}E_j$.
\end{proof}

In particular, $K$ is the inverse matrix of $L+J$ and $K-\frac{1}{n^2}J$ is the Moore-Penrose pseudo-inverse of the Laplacian matrix $L$.
Since the largest eigenvalue of $L$ is at most $n$, $\frac{1}{n}$ is the smallest eigenvalue of $K$.

\begin{ex}\rm
Let $n$ be an integer at least $2$.
Let $G=(V,E)$ be a star with $V=\{1,\cdots,n\}$ and $E=\{\{1,i\}:  2\leq i\leq n\}$.
Then the eigenvalues of the Laplacian matrix $L$ are $\lambda_1=0$, $\lambda_2=1$ and $\lambda_3=n$.
The corresponding multiplicities are $m_1=1$, $m_2=n-2$ and $m_3=1$.
The orthogonal projections $\mathbb{R}^n$ onto the eigenspaces are $E_1=\frac{1}{n}J$, $E_3=u u^T$ where $u=\frac{1}{\sqrt{n(n-1)}}(n-1,-1,\ldots,-1)^T$ and $E_2$ is determined from the equation $\sum_{i=1}^3 E_i=I$.
Thus, by Theorem~\ref{G1}, the Gram matrix of reproducing kernel of $G$ is given by \begin{align*}
K=\frac{1}{n^2}J+E_2+\frac{1}{n}E_3
=\begin{pmatrix}
\frac{1}{n} & \0^T \\  
\0 & I-\frac{1}{n}J
\end{pmatrix},
\end{align*}
where $\0$ denotes the all-zeros column vector.
\end{ex}

\begin{ex}\label{ex:path}\rm
Let $n$ be an integer at least $2$.
Let $G=(V,E)$ be a path of length $n$ with $V=\{1,\cdots,n\}$ and $E=\{\{i,i+1\}:  1\leq i\leq n-1\}$.
Then the eigenvalues of the Laplacian matrix $L$ are $\lambda_j=4\sin^2(\frac{\pi j}{2n})$ for $0\leq j\leq n-1$.
The corresponding normalized column eigenvector for $\lambda_j$ are $u_j=(\sqrt{\frac{2}{n}}\cos((i-\frac{1}{2})\frac{\pi j}{n}))_{1\leq i\leq n}$.
The orthogonal projection $\mathbb{R}^n$ onto the the eigenspace for $\lambda_j$ is $E_j:=u_j u_j^T$.
Thus, by Theorem~\ref{G1}, the Gram matrix of reproducing kernel of $G$ is given by $K=\frac{1}{n^2}J+\sum_{j=1}^{n-1}\frac{1}{\lambda_j}E_j$.
For $1\leq i,j\leq n$, the $(i,j)$-entry of $K$ is
\begin{align*}
K_{i,j}&=\frac{1}{n^2}+\frac{1}{2n}\sum_{l=1}^{n-1}\frac{\cos((2i-1)\frac{\pi l}{2n})\cos((2j-1)\frac{\pi l}{2n})}{\sin^2(\frac{\pi l}{2n})}\\
&=\frac{(n+1)(2n^2-5n+6)}{6n^2}+\frac{(i-1)i}{2n}+\frac{(j-1)j}{2n}-\frac{|i-j|+i+j-2}{2}.
\end{align*}
\end{ex}

\begin{prop}\label{prop:022}
Let $ G $ be a connected graph with $ n $ vertices with Laplacian matrix $L$.
Let $G'$ be a graph obtained by an edge added to $G$. 
Let $ 0=\lambda_1,\lambda_2,\ldots,\lambda_s $ be all the distinct eigenvalues of $ L $ and 
$m_i$ the multiplicity of $\lambda_i$ for $1\leq i\leq s$.
Then the following hold:
\begin{enumerate}
\item $\det(K)=\frac{1}{n\lambda_2^{m_2}\cdots\lambda_s^{m_s}}=\frac{1}{n^2\tau (G)}$.
\item $\det(K(G)) \ge \det(K(G'))$.
\item $\frac{1}{n^n} \le \det(K) \le \frac{1}{n^2}$ holds.
The equality on the right holds if and only if $ G $ is a tree, and 
the equality on the left holds if and only if $ G $ is the complete graph.
\end{enumerate}
\end{prop}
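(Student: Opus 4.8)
The plan is to reduce all three parts to the spectral description of $K$ furnished by Theorem~\ref{G1}: the eigenvalues of $K$ are $1/n$, with multiplicity $m_1=1$ and eigenvector $\1$ (by Theorem~\ref{thm:4-2}), together with $1/\lambda_j$ with multiplicity $m_j$ for $2\le j\le s$, where $\lambda_2,\dots,\lambda_s$ are the nonzero Laplacian eigenvalues. Everything below is then bookkeeping on these eigenvalues.

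For part (i) I would compute $\det(K)$ directly as the product of its eigenvalues,
$$\det(K)=\frac{1}{n}\prod_{j=2}^{s}\Bigl(\frac{1}{\lambda_j}\Bigr)^{m_j}=\frac{1}{n\lambda_2^{m_2}\cdots\lambda_s^{m_s}},$$
which is the first equality. The second equality is Kirchhoff's matrix-tree theorem, which expresses the spanning-tree count as $\tau(G)=\frac{1}{n}\lambda_2^{m_2}\cdots\lambda_s^{m_s}$; substituting $\lambda_2^{m_2}\cdots\lambda_s^{m_s}=n\tau(G)$ gives $\det(K)=1/(n^2\tau(G))$.

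Part (ii) is then immediate from (i): every spanning tree of $G$ uses only edges of $G$, hence is again a spanning tree of the graph $G'$ obtained by adding one edge, so $\tau(G')\ge\tau(G)$, whence $\det(K(G'))=1/(n^2\tau(G'))\le 1/(n^2\tau(G))=\det(K(G))$. Alternatively one could write the Laplacian of $G'$ as $L'=L+\e_i\e_i^T-\e_i\e_j^T-\e_j\e_i^T+\e_j\e_j^T$, a rank-one positive semidefinite update for the new edge $\{i,j\}$, and apply Weyl monotonicity to obtain $\lambda_k(L')\ge\lambda_k(L)$ for each $k$, giving the same inequality between the products of nonzero eigenvalues.

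For part (iii), the upper bound is read off from (i): a connected graph satisfies $\tau(G)\ge 1$, with equality exactly when $G$ is a tree (a connected graph has a unique spanning tree iff it is a tree), so $\det(K)=1/(n^2\tau(G))\le 1/n^2$ with equality iff $G$ is a tree. For the lower bound I would invoke the remark recorded just after Theorem~\ref{G1} that $\lambda_{\max}(L)\le n$: this makes every eigenvalue $1/\lambda_j$ of $K$ at least $1/n$, and since $1/n$ is itself an eigenvalue, all $n$ eigenvalues of $K$ lie in $[1/n,\infty)$, forcing $\det(K)\ge (1/n)^n=1/n^n$. Equality requires every eigenvalue of $K$ to equal $1/n$, i.e.\ $\lambda_j=n$ for all $j\ge 2$; then $L=nI-J$, which is precisely the Laplacian of the complete graph, and conversely $K_n$ attains $\det(K)=1/n^n$. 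The only genuinely delicate points are the two equality characterizations and the correct normalization of the matrix-tree theorem together with the bound $\lambda_{\max}(L)\le n$; none of these presents real difficulty once the spectrum of $K$ from Theorem~\ref{G1} is available.
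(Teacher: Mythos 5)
Your argument is correct and, for parts (i), (ii) and the upper bound in (iii), follows the paper's proof essentially verbatim: $\det(K)$ is the product of the eigenvalues of $K$ supplied by Theorem~\ref{G1}, the matrix-tree theorem in the form $\tau(G)=\frac{1}{n}\lambda_2^{m_2}\cdots\lambda_s^{m_s}$ gives the second equality in (i), monotonicity of $\tau$ under edge addition gives (ii), and $\tau(G)\ge 1$ with equality iff $G$ is a tree gives the right-hand part of (iii). The one place you genuinely diverge is the lower bound in (iii): the paper quotes the bound $\tau(G)\le n^{n-2}$ (Cayley's formula, with equality iff $G$ is complete) and substitutes it into (i), whereas you instead use $\lambda_{\max}(L)\le n$ to force every eigenvalue of $K$ into $[1/n,\infty)$ and then identify the equality case spectrally, noting that all nonzero Laplacian eigenvalues equal to $n$ forces $L=nI-J$, the Laplacian of the complete graph. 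Both routes are sound and of comparable length; yours stays entirely inside the spectral picture already set up after Theorem~\ref{G1} (where the bound $\lambda_{\max}(L)\le n$ is explicitly recorded), while the paper's version keeps all three parts uniformly phrased in terms of $\tau(G)$.
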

\begin{proof}
Since $ \frac{1}{n},\frac{1}{\lambda_2},\cdots,\frac{1}{\lambda_s} $
	are the eigenvalues of $ K $,
	$ \det(K)=\frac{1}{n\lambda_2^{m_2}\cdots\lambda_s^{m_s}} $.
Also the fact $ \tau (G)=\frac{1}{n}\lambda_2^{m_2}\cdots\lambda_s^{m_s} $ is well-known.
Hence (i) holds.

(ii) follows immediately from (i).

 From (i) and that $\tau (G)\ge 1 $ with equality if and only if $ G $ is a tree and $\tau (G)\le n^{n-2} $ with equality if and only if $ G $ is the complete graph,
(iii) follows.
\end{proof}

\section{Entries of Gram matrix}\label{sec:entry}
In this section, we study the entries of the Gram matrix $K$ for a graph.
We will show a combinatorial interpretation and several bounds on the entries of $K$. 

Define $r_{i,j}=K_{i,i}+K_{j,j}-2K_{i,j}$.
The values $r_{i,j}$ is called the resistance between vertices $i$ and $j$, see \cite[Section 9]{B3}.
The following is a fundamental result on the resistance.
\begin{lem}[\cite{KR}]\label{lm:002}
Let $ G $ be a connected graph with $ n $ vertices and Laplacian matrix $L$.
Then the following hold:
\begin{enumerate}
\item $\{r_{i,j}:1\leq i,j\leq n\}$ satisfies the axiom of distance,	
\item $r_{i,j}\leq d(i,j)$ with equality if and only if there is the unique path between $i$ and $j$.
\end{enumerate}
\end{lem}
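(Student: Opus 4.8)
The plan is to recognize $r_{i,j}$ as the \emph{effective resistance} between $i$ and $j$ and to exploit the electrical-network interpretation. Writing $L^{+}=K-\frac{1}{n^2}J$ for the Moore--Penrose pseudo-inverse of $L$ (as identified just after Theorem~\ref{G1}) and noting that $J(e_i-e_j)=\0$, one has
$$r_{i,j}=(e_i-e_j)^T K (e_i-e_j)=(e_i-e_j)^T L^{+}(e_i-e_j).$$
This single identity drives both parts.

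For part (i), symmetry $r_{i,j}=r_{j,i}$ is immediate from $K^T=K$. Since $L^{+}=\sum_{j\ge 2}\lambda_j^{-1}E_j$ is positive semidefinite with kernel $\R\1$, the quadratic form gives $r_{i,j}\ge 0$, and $r_{i,j}=0$ forces $e_i-e_j\in\ker L^{+}=\R\1$, i.e.\ $i=j$. The triangle inequality is the main obstacle: the naive route through a minimum-energy (Thomson) principle yields only that $\sqrt{r_{i,j}}$ is a metric, which is weaker than claimed. Instead I would use superposition of potentials together with the discrete maximum principle. Let $\phi_A$ solve $L\phi_A=e_i-e_j$ and $\phi_B$ solve $L\phi_B=e_j-e_k$, so that $r_{i,j}=\phi_A(i)-\phi_A(j)$ and $r_{j,k}=\phi_B(j)-\phi_B(k)$. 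Since $\phi_A$ is harmonic off $\{i,j\}$ (its value at every other vertex is the average of its neighbours), the maximum principle makes $i$ and $j$ the global maximizer and minimizer of $\phi_A$, and likewise $j,k$ for $\phi_B$. Adding the two experiments, $\phi_A+\phi_B$ solves $L(\phi_A+\phi_B)=e_i-e_k$, whence
$$r_{i,k}=(\phi_A(i)-\phi_A(k))+(\phi_B(i)-\phi_B(k))\le(\phi_A(i)-\phi_A(j))+(\phi_B(j)-\phi_B(k))=r_{i,j}+r_{j,k},$$
the two inequalities coming exactly from the extremal locations of $\phi_A$ and $\phi_B$.

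For part (ii), the bound $r_{i,j}\le d(i,j)$ follows from the minimum-energy principle: the function sending one unit of flow along a fixed geodesic from $i$ to $j$ and nothing elsewhere is a unit flow of energy $d(i,j)$, so the minimal energy $r_{i,j}$ is at most $d(i,j)$; equivalently this is Rayleigh monotonicity, comparing $G$ with the sub-network consisting of a single shortest path (a series of $d(i,j)$ unit resistors). For the equality case I would use uniqueness of the electrical (energy-minimizing) flow. If the $i$--$j$ path is unique, every edge along it is a cut edge, so any unit flow carries exactly one unit across each such bridge and zero on the dead-end branches; the geodesic flow is therefore the only unit flow, hence the electrical flow, giving $r_{i,j}=d(i,j)$. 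Conversely, if two distinct $i$--$j$ paths exist, their symmetric difference contains a cycle on which the geodesic flow violates Kirchhoff's cycle law; thus the geodesic flow is not the electrical flow, and by strict convexity of the energy over the affine set of unit flows the minimum is strictly smaller, so $r_{i,j}<d(i,j)$.

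The delicate points are (a) obtaining the \emph{linear} triangle inequality rather than the square-root one, for which the superposition-plus-maximum-principle argument is essential, and (b) the equality analysis in (ii), where uniqueness of the energy-minimizing flow and the cycle law do the work. Since the statement is attributed to \cite{KR}, one could alternatively just cite it, but the above gives a self-contained derivation from Theorem~\ref{G1}.
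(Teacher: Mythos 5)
The paper does not prove this lemma at all: it is quoted from Klein and Randi\'c \cite{KR} (see also \cite{B3}), so your self-contained derivation is necessarily a different route from the paper's, and it is essentially the standard electrical-network argument. Your reduction $r_{i,j}=(\delta_i-\delta_j)^TK(\delta_i-\delta_j)=(\delta_i-\delta_j)^TL^{+}(\delta_i-\delta_j)$ is correct; positive semidefiniteness of $L^{+}$ with kernel $\R\1$ disposes of symmetry, nonnegativity and the separation axiom; and your superposition-plus-maximum-principle proof of the \emph{linear} triangle inequality is exactly the right fix for the difficulty you correctly identify (the quadratic-form viewpoint alone only makes $\sqrt{r_{i,j}}$ a metric). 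What you gain over the paper is a proof readable from Theorem~\ref{G1} alone; the cost is that Thomson's principle and the uniqueness of the energy-minimizing unit flow are imported without proof, which is reasonable given that the paper itself outsources the whole lemma.

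Two steps in the equality analysis of (ii) need tightening. First, when the $i$--$j$ path is unique it is \emph{not} true that the geodesic flow is the only unit flow: if $G$ contains a cycle disjoint from the path, one may superpose a circulation around it. What you actually need, and what is true, is that every edge of the unique path is a bridge separating $i$ from $j$, so every unit flow carries a full unit across each of these $d(i,j)$ edges and hence has energy at least $d(i,j)$; the geodesic flow attains this, so the minimum equals $d(i,j)$. Second, in the converse it is not immediate that the cycle law fails on a cycle of the symmetric difference $P_1\triangle P_2$, since the $\pm1$ contributions of the $P_1$-edges around one particular such cycle could in principle cancel. A cleaner version of the same idea: if the geodesic flow were electrical with potential $\phi$, summing the edge differences along the geodesic $P_1$ gives $\phi(i)-\phi(j)=d(i,j)$, while summing them along a second path $P_2$ gives at most $|E(P_1)\cap E(P_2)|<d(i,j)$, a contradiction; strict convexity of the energy on the affine set of unit flows then yields $r_{i,j}<d(i,j)$ as you say. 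With these repairs the argument is complete.
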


We use the following lemma to provide a combinatorial interpretation on the entries of $K$.
For vertices $i,j$ in a connected graph $G$, denote by $\tau(G;i,j)$ the number of spanning forests of $G$ with $2$ components in which each component contains exactly one of $\{i,j\}$. 
Thus when $i$ and $j$ coincide, $\tau(G;i,j)$ is zero.
Define $r_{G,i}=\sum_{j\in V(G)}r_{i,j}$ for each vertex $i\in V(G)$.
It is also denoted by $r_i$.
\begin{lem}\label{lem:5-1}
Let $G$ be a connected graph with $n$ vertices. 
Then the following hold.
\begin{enumerate}
\item $r_{i,j}=\frac{\tau(G;i,j)}{\tau(G)}$.
\item $r_{i}=n K_{i,i}+\text{Tr}(K)-\frac{2}{n}$.\label{eq:1100}
\item $K_{i,i}=\frac{1}{n}(\sum_{l=1}^n r_{i,l}-\frac{1}{2n}\sum_{k,l=1}^n r_{k,l}+\frac{1}{n})$.
\end{enumerate} 
\end{lem}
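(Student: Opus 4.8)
The plan is to establish the three identities in sequence, using part (i) as the combinatorial foundation and then deriving (ii) and (iii) by elementary linear-algebraic manipulation with the matrix $K$. For part (i), the statement $r_{i,j} = \tau(G;i,j)/\tau(G)$ is the classical formula expressing effective resistance as a ratio of weighted spanning-forest counts. Since $K - \frac{1}{n^2}J$ is the Moore--Penrose pseudo-inverse $L^+$ of the Laplacian (established after Theorem \ref{G1}), and since $r_{i,j} = K_{i,i} + K_{j,j} - 2K_{i,j}$ is unchanged if we replace $K$ by $K - \frac{1}{n^2}J$ (the $J$-term contributes $\frac{1}{n^2}(1 + 1 - 2) = 0$), we have $r_{i,j} = L^+_{i,i} + L^+_{j,j} - 2L^+_{i,j} = (\e_i - \e_j)^T L^+ (\e_i - \e_j)$. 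First I would invoke the All-Minors Matrix-Tree Theorem, which identifies the cofactors of $L$ with spanning-forest counts; combined with $\tau(G) = \frac{1}{n}\lambda_2^{m_2}\cdots\lambda_s^{m_s}$ (used in Proposition \ref{prop:022}), this yields the ratio formula. The cleanest route is to cite \cite{KR} or \cite{B3} for the identity $(\e_i-\e_j)^T L^+(\e_i-\e_j) = \tau(G;i,j)/\tau(G)$, treating part (i) as essentially a restatement of known spanning-forest results rather than reproving the Matrix-Tree machinery.

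For part (ii), I would sum the definition $r_{i,j} = K_{i,i} + K_{j,j} - 2K_{i,j}$ over $j$:
\begin{align*}
r_i = \sum_{j=1}^n r_{i,j} = \sum_{j=1}^n \bigl(K_{i,i} + K_{j,j} - 2K_{i,j}\bigr)
= n K_{i,i} + \operatorname{Tr}(K) - 2\sum_{j=1}^n K_{i,j}.
\end{align*}
The row sum $\sum_{j=1}^n K_{i,j}$ is the $i$-th entry of $K\1$, and by Theorem \ref{thm:4-1}-style reasoning (specifically Theorem \ref{thm:4-2}, where $K\1 = \1/|V|$ was shown) this equals $1/n$. Substituting gives $r_i = n K_{i,i} + \operatorname{Tr}(K) - 2/n$, which is exactly \eqref{eq:1100}. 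This step is purely mechanical once the row-sum fact is in hand.

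For part (iii), the strategy is to solve the linear relation in (ii) for $K_{i,i}$ and eliminate the trace term by averaging. Summing (ii) over all $i$ gives $\sum_{i} r_i = \sum_{k,l} r_{k,l} = n\operatorname{Tr}(K) + n\operatorname{Tr}(K) - 2 = 2n\operatorname{Tr}(K) - 2$, so $\operatorname{Tr}(K) = \frac{1}{2n}\sum_{k,l=1}^n r_{k,l} + \frac{1}{n}$. Rearranging (ii) as $K_{i,i} = \frac{1}{n}\bigl(r_i - \operatorname{Tr}(K) + \frac{2}{n}\bigr)$ and substituting this expression for $\operatorname{Tr}(K)$ yields
\begin{align*}
K_{i,i} = \frac{1}{n}\left(\sum_{l=1}^n r_{i,l} - \frac{1}{2n}\sum_{k,l=1}^n r_{k,l} - \frac{1}{n} + \frac{2}{n}\right)
= \frac{1}{n}\left(\sum_{l=1}^n r_{i,l} - \frac{1}{2n}\sum_{k,l=1}^n r_{k,l} + \frac{1}{n}\right),
\end{align*}
as claimed.

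The only genuine obstacle is part (i): everything in (ii) and (iii) is formal bookkeeping that follows from the definition of $r_{i,j}$ and the single input $K\1 = \1/n$, whereas (i) rests on the combinatorial content of the Matrix-Tree Theorem. I expect the author either cites \cite{KR}/\cite{B3} directly or gives a short argument via the all-minors cofactor expansion of $L$; the \emph{main work} is recognizing that $r_{i,j}$ computed from $K$ agrees with the effective resistance computed from $L^+$, after which the spanning-forest interpretation is standard.
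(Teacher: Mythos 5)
Your proposal is correct and follows essentially the same route as the paper: parts (ii) and (iii) are proved by exactly the same summation and substitution, using $K\1=\1/n$, and part (i) is in both cases an appeal to the classical spanning-forest interpretation of effective resistance (the paper cites the cofactor formula $r_{i,j}=\det(L(i,j|i,j))/\det(L(i|i))$ and Theorem~4.7 of \cite{B3}, while you route the citation through $L^{+}=K-\frac{1}{n^2}J$ and the all-minors matrix-tree theorem). Your explicit remark that the $\frac{1}{n^2}J$ term cancels in $K_{i,i}+K_{j,j}-2K_{i,j}$, so that $r_{i,j}$ really is the standard resistance distance, is a small point the paper leaves implicit.
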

\begin{proof}
(i): When $i=j$, there is nothing to prove. 
Assume that $i$ and $j$ are distinct. 
From \cite[equation (9.3) in p 113]{B3}, $r_{i,j}=\frac{\det(L(i,j|i,j))}{\det(L(i|i))}$.
Also \cite[Theorem~4.7]{B3} implies that $\frac{\det(L(i,j|i,j))}{\det(L(i|i))}=\frac{\tau(G;i,j)}{\tau(G)}$. 

(ii): By the definition of $r_i$ and $r_{i,j}$, we have the following equalities:
\begin{align*}
r_i&=\sum_{j\in V(T)}r_{i,j}\\
&=\sum_{j\in V(T)}(K_{i,i}+K_{j,j}-2K_{i,j})\\
&=n K_{i,i}+\sum_{j\in V(T)}K_{j,j}-2\sum_{j\in V(T)}K_{i,j}\\
&=n K_{i,i}+\text{Tr}(K)-\frac{2}{n}.
\end{align*}

(iii): Summing (ii) over $i=1,\ldots,n$ yields that $\text{Tr}(K)=\frac{1}{2n}\sum_{i,j=1}^n r_{i,j}+\frac{1}{n}$. 
Substituting this equation (ii) we have the desired equality.
\end{proof}
Now we obtain the following equality.
\begin{prop}
Let $G$ be a connected graph with $n$ vertices.
Then the following hold.
\begin{align*}
K_{i,j}=\frac{1}{2n}\sum_{l=1}^n \frac{\tau(G;i,l)+\tau(G;j,l)}{\tau(G)}-\frac{1}{2n^2}\sum_{k,l=1}^n \frac{\tau(G;k,l)}{\tau(G)}+\frac{1}{n^2}-\frac{1}{2}\frac{\tau(G;i,j)}{\tau(G)}.
\end{align*}
\end{prop}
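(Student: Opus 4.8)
The plan is to combine the definition of the resistance $r_{i,j}$ with the formula for the diagonal entries of $K$ already established in Lemma~\ref{lem:5-1}. By definition $r_{i,j}=K_{i,i}+K_{j,j}-2K_{i,j}$, so solving for the off-diagonal entry gives the identity $K_{i,j}=\frac{1}{2}(K_{i,i}+K_{j,j})-\frac{1}{2}r_{i,j}$. Thus everything reduces to expressing $K_{i,i}$, $K_{j,j}$, and $r_{i,j}$ in terms of the combinatorial quantities $\tau(G;\cdot,\cdot)$ and $\tau(G)$.

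First I would substitute the formula from Lemma~\ref{lem:5-1}(iii), namely $K_{i,i}=\frac{1}{n}(\sum_{l=1}^n r_{i,l}-\frac{1}{2n}\sum_{k,l=1}^n r_{k,l}+\frac{1}{n})$, together with its analogue for $K_{j,j}$, into $\frac{1}{2}(K_{i,i}+K_{j,j})$. The two copies of the double-sum term $-\frac{1}{2n}\sum_{k,l}r_{k,l}$ and of the additive constant $\frac{1}{n}$ combine, and after dividing by $2$ one obtains
\begin{align*}
\frac{1}{2}(K_{i,i}+K_{j,j})=\frac{1}{2n}\sum_{l=1}^n(r_{i,l}+r_{j,l})-\frac{1}{2n^2}\sum_{k,l=1}^n r_{k,l}+\frac{1}{n^2}.
\end{align*}

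Next I would insert this into $K_{i,j}=\frac{1}{2}(K_{i,i}+K_{j,j})-\frac{1}{2}r_{i,j}$ and finally replace every resistance by its combinatorial value via Lemma~\ref{lem:5-1}(i), $r_{i,j}=\frac{\tau(G;i,j)}{\tau(G)}$. This turns the three resistance sums into the three $\tau$-sums appearing in the claim and the single term $-\frac{1}{2}r_{i,j}$ into $-\frac{1}{2}\frac{\tau(G;i,j)}{\tau(G)}$, reproducing the stated formula exactly.

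Since every ingredient is already available, there is no genuine obstacle here; the only point requiring care is the bookkeeping of the powers of $n$ when the two diagonal formulas are summed and halved, so that the coefficient of the double sum correctly collapses to $\frac{1}{2n^2}$ and the leftover constant correctly becomes $\frac{1}{n^2}$.
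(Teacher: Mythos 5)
Your proposal is correct and is essentially the paper's own argument: the paper likewise derives the formula from $K_{i,j}=\tfrac{1}{2}(K_{i,i}+K_{j,j}-r_{i,j})$ together with Lemma~\ref{lem:5-1}(i) and (iii), merely organizing it as the case $i=j$ followed by the case $i\neq j$. Your bookkeeping of the $n$-powers (the double sum collapsing to coefficient $\tfrac{1}{2n^2}$ and the constant to $\tfrac{1}{n^2}$) checks out.
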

\begin{proof} 
The case for $i=j$ follows from Lemma~\ref{lem:5-1}(i), (iii).
The case for $i\neq j$ follows from the case for $i=j$, the definition of $r_{i,j}$ and Lemma~\ref{lem:5-1}(i).
\end{proof}

Next we derive several inequalities on the entries of $K$.
A vertex in $G$ is said to be dominating if it is adjacent to all other vertices. 
The following lemma gives a lower bounds on the diagonal entries and characterizes dominating vertices.
\begin{lem}\label{lm:1010}
Let $ G $ be a connected graph with $ n $ vertices.
For any $ i\in\{1,\ldots,n\} $,
	$ K_{i,i}\ge \frac{1}{d_i+1} $.
In particular, $K_{i,i}\geq \frac{1}{n}$ with equality if and only if $i$ is a dominating vertex in $G$.
\end{lem}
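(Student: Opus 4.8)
The plan is to exploit the extremal characterization of the diagonal entries that comes directly from the reproducing property. Since $K_{i,i}=k(i,i)=\la k_i,k_i\ra=\|k_i\|_{\cH}^2$, the Cauchy--Schwarz inequality $|u(i)|^2=|\la u,k_i\ra|^2\le \|u\|_{\cH}^2\,\|k_i\|_{\cH}^2$ holds for every $u$, with equality at $u=k_i$. Rearranging gives
$$K_{i,i}=\sup_{u\neq 0}\frac{|u(i)|^2}{\|u\|_{\cH}^2},$$
so that any single nonzero test function produces a lower bound for $K_{i,i}$. This reduces the first inequality to a well-chosen test function.

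First I would take $u=\delta_i$. As computed in the proof of Theorem~\ref{lem:3-5}, $\|\delta_i\|_{\cH}^2=1+d_i$, while $\delta_i(i)=1$; feeding this into the displayed supremum yields $K_{i,i}\ge \frac{1}{d_i+1}$, which is exactly the first assertion. Because every vertex of an $n$-vertex graph satisfies $d_i\le n-1$, this already gives $K_{i,i}\ge\frac{1}{d_i+1}\ge\frac{1}{n}$, establishing the ``in particular'' bound.

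For the equality characterization the converse direction is immediate: if $K_{i,i}=\frac{1}{n}$, then $\frac{1}{n}=K_{i,i}\ge\frac{1}{d_i+1}$ forces $d_i\ge n-1$, hence $d_i=n-1$ and $i$ is dominating. The forward direction is where the genuine work lies, and I expect it to be the main obstacle: I would show that when $i$ is dominating the reproducing kernel is \emph{exactly} $k_i=\frac{1}{n}\delta_i$. The key computation is $\E(u,\delta_i)=\sum_{j\sim i}(u(i)-u(j))=d_i\,u(i)-\sum_{j\sim i}u(j)$, where a dominating vertex makes $\sum_{j\sim i}u(j)=\big(\sum_{x}u(x)\big)-u(i)$; substituting this into $\la u,\delta_i\ra=\big(\sum_{x}u(x)\big)\big(\sum_{x}\delta_i(x)\big)+\E(u,\delta_i)$ collapses everything to $\la u,\delta_i\ra=n\,u(i)$. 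Thus $u(i)=\la u,\tfrac{1}{n}\delta_i\ra$ for all $u$, so by uniqueness of the reproducing kernel $k_i=\frac{1}{n}\delta_i$, whence $K_{i,i}=\|k_i\|_{\cH}^2=\frac{1}{n^2}\|\delta_i\|_{\cH}^2=\frac{1+d_i}{n^2}=\frac{1}{n}$. The only subtlety is verifying that the dominating hypothesis is precisely what is needed to collapse $\E(u,\delta_i)$, and confirming uniqueness of $k_i$ (already noted in Section~\ref{RKHS}) to conclude $k_i=\frac1n\delta_i$ rather than merely a bound.
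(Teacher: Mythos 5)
Your proof is correct, but it follows a genuinely different route from the paper's. For the bound $K_{i,i}\ge\frac{1}{d_i+1}$ the paper simply cites Fiedler's theorem on the diagonal entries of mutually inverse positive definite matrices (applied to $K=(L+J)^{-1}$), whereas you derive it intrinsically from the RKHS structure via the extremal characterization $K_{i,i}=\sup_{u\neq0}|u(i)|^2/\|u\|_{\cH}^2$ with the test function $\delta_i$; this makes the lemma self-contained and, as a bonus, makes the direction ``equality $\Rightarrow$ dominating'' an immediate consequence of $\frac1n=K_{i,i}\ge\frac{1}{d_i+1}$, which is cleaner than the paper's argument (the paper instead observes $K_{l,i}=0$ for $l\neq i$ and reads off $L_{i,i}=n-1$). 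For the converse direction the paper compares the $i$-th row of the identity $K(L+J)=I$, while you compute the reproducing kernel explicitly, showing $\la u,\delta_i\ra=n\,u(i)$ for a dominating vertex and hence $k_i=\frac1n\delta_i$; your computation checks out (the collapse $\E(u,\delta_i)=d_iu(i)-\sum_{j\sim i}u(j)=nu(i)-\sum_x u(x)$ is exactly where domination is used), and it recovers, for a single dominating vertex, the formula $k_x=\frac{1}{|V|}\delta_x$ that the paper establishes for complete graphs in Section~\ref{RKHS}. What the paper's matrix-algebraic route buys is brevity given that Theorem~\ref{G1} is already available; what yours buys is independence from both Theorem~\ref{G1} and the external reference \cite{F}, plus the explicit identification of $k_i$ at a dominating vertex.
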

\begin{proof}
See \cite[Theorem 3.2]{F} for the proof of $ K_{i,i}\ge \frac{1}{d_i+1} $.

Since $d_1+1\leq n$, $K_{i,i}\geq \frac{1}{n}$ holds. 
If equality holds, we observe that $K_{i,i}=\frac{1}{n}$ and $K_{l,i}=0$ for any $l\neq i$.
Then $L_{i,i}=n-1$ holds, and thus a vertex $i$ is a dominating vertex.
Conversely assume that a vertex $i$ is a dominating vertex.
Then $L_{i,i}=n-1$ and $L_{i,l}=-1$ for any $l\neq i$.
Comparing the $i$-th row of $K(L+J)=I$ yields that $K_{i,i}=\frac{1}{n}$ and $K_{i,l}=0$ for any $l\neq i$.
\end{proof}

The following proposition gives an upper bound for the least entry of each row in $K$.
\begin{prop}\label{lm:1008}
Let $ G $ be a connected graph with $ n $ vertices.
If $ K_{i,j}=\min_{l\neq i}K_{i,l} $,
then $K_{i,j}\le 0$ with equality if and only if the vertex $i$ is a dominating vertex in $G$.
\end{prop}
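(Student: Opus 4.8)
The plan is to exploit the two facts already available: every row of $K$ sums to $1/n$, and each diagonal entry satisfies $K_{i,i}\ge 1/n$. The first follows from Theorem~\ref{thm:4-2}, where $K\1=\1/|V|$ gives $\sum_{l=1}^n K_{i,l}=\frac{1}{n}$ for every fixed $i$. The second is Lemma~\ref{lm:1010}. Combining them, I would first isolate the off-diagonal sum in row $i$:
\begin{align*}
\sum_{l\neq i}K_{i,l}=\frac{1}{n}-K_{i,i}\le 0.
\end{align*}

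Next I would use the elementary observation that a minimum is no larger than an average. Since $K_{i,j}=\min_{l\neq i}K_{i,l}$ is the least of the $n-1$ off-diagonal entries in row $i$, it is bounded above by their mean:
\begin{align*}
K_{i,j}\le\frac{1}{n-1}\sum_{l\neq i}K_{i,l}=\frac{1}{n-1}\left(\frac{1}{n}-K_{i,i}\right)\le 0,
\end{align*}
which establishes the inequality $K_{i,j}\le 0$.

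For the equality characterization I would argue both directions. Suppose $K_{i,j}=0$. Because $K_{i,j}$ is the minimum off-diagonal entry, every $K_{i,l}$ with $l\neq i$ satisfies $K_{i,l}\ge 0$, so $\sum_{l\neq i}K_{i,l}\ge 0$; together with the reverse inequality $\sum_{l\neq i}K_{i,l}=\frac{1}{n}-K_{i,i}\le 0$ this forces the off-diagonal sum to vanish, hence $K_{i,i}=\frac{1}{n}$ (and in fact each $K_{i,l}=0$). By the equality clause of Lemma~\ref{lm:1010}, $K_{i,i}=\frac{1}{n}$ holds exactly when $i$ is a dominating vertex. Conversely, if $i$ is dominating then Lemma~\ref{lm:1010} (and the computation of the $i$-th row of $K(L+J)=I$ recorded there) gives $K_{i,i}=\frac{1}{n}$ and $K_{i,l}=0$ for all $l\neq i$, whence $\min_{l\neq i}K_{i,l}=0$.

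I do not expect a genuine obstacle here: the whole argument rides on the constant row-sum $1/n$ and the diagonal bound $K_{i,i}\ge 1/n$, both proved earlier. The only point requiring care is the equality analysis, where one must combine the sign constraint $K_{i,l}\ge K_{i,j}=0$ with the sum identity to pin down $K_{i,i}=\frac{1}{n}$ before invoking the dominating-vertex characterization of Lemma~\ref{lm:1010}; I would make sure to cite that lemma for both the "only if" and "if" directions rather than re-deriving the row of $K(L+J)=I$.
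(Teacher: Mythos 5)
Your proposal is correct and follows essentially the same route as the paper: both arguments combine the row-sum identity $K\1=\1/n$ with the diagonal bound $K_{i,i}\ge 1/n$ from Lemma~\ref{lm:1010}, and your ``minimum is at most the average'' step is just a rearrangement of the paper's inequality $\frac{1}{n}=K_{i,i}+\sum_{l\neq i}K_{i,l}\ge K_{i,i}+(n-1)K_{i,j}$. Your equality analysis is in fact spelled out more carefully than the paper's one-line reference back to Lemma~\ref{lm:1010}, which is a welcome improvement rather than a deviation.
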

\begin{proof}
Since all-ones vector is the eigenvector of $K$ with eigenvalue $\frac{1}{n}$, we have  
\begin{align*}
\frac{1}{n}=K_{i,i}+\sum_{l\neq i}K_{i,l}\ge K_{i,i}+(n-1)K_{i,j}.
\end{align*}
Since $K_{i,i}\geq \frac{1}{d_i+1} \geq \frac{1}{n}$ by Lemma~\ref{lm:1010}, we have the desired inequality.

The equivalent condition the equality follows from the same proof in Lemma~\ref{lm:1010}.
\end{proof}

\begin{lem}\label{lem:0000}
Let $G$ be a connected graph which does not have an edge $e=\{i,j\}$.
Let $G'$ be the graph added the edge $e$ to $G$.
Then for any $l\in\{1,\ldots,n\}$, $K_{l,l}\geq K'_{l,l}$ holds. 
\end{lem}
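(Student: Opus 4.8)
The plan is to exploit the identity $K=(L+J)^{-1}$ recorded just after Theorem~\ref{G1}, together with the observation that adding a single edge perturbs the Laplacian by a rank-one term. Writing $b=\delta_i-\delta_j$ as a column vector (with $+1$ in the $i$-th coordinate, $-1$ in the $j$-th, and $0$ elsewhere), a direct inspection of $L=D-A$ shows that the Laplacian $L'$ of $G'$ satisfies $L'=L+bb^T$: adding $\{i,j\}$ raises both $d_i$ and $d_j$ by one and turns the off-diagonal entries $A_{ij},A_{ji}$ from $0$ to $1$, which is exactly the effect of adding $bb^T$. Consequently $L'+J=(L+J)+bb^T$, so that $K'=(L'+J)^{-1}$ is a rank-one update of $K=(L+J)^{-1}$.

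Next I would apply the Sherman--Morrison formula. Since $G$ is connected, $L$ is positive semidefinite with kernel $\R\1$ while $J=\1\1^T$ is positive semidefinite with range $\R\1$; hence $x^T(L+J)x=x^TLx+(\1^Tx)^2=0$ forces $x=0$, so $L+J$ and therefore $K$ are positive definite. In particular $b^TKb>0$, so the scalar $1+b^TKb$ is strictly positive and Sherman--Morrison applies, giving
\begin{equation*}
K'=K-\frac{Kbb^TK}{1+b^TKb}=K-\frac{(Kb)(Kb)^T}{1+b^TKb},
\end{equation*}
where the last equality uses $K^T=K$. The subtracted matrix is a nonnegative multiple of the rank-one positive semidefinite matrix $(Kb)(Kb)^T$, so its $l$-th diagonal entry equals $(Kb)_l^2/(1+b^TKb)\ge 0$.

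Reading off the diagonal of the displayed identity then yields
\begin{equation*}
K'_{l,l}=K_{l,l}-\frac{(Kb)_l^2}{1+b^TKb}\le K_{l,l}
\end{equation*}
for every $l$, which is the claim. No genuine obstacle arises here: the only points needing care are the positive definiteness of $L+J$, so that the denominator is positive and the formula is valid, and the routine bookkeeping identifying the edge perturbation with the clean term $bb^T$. The whole content of the lemma is the recognition of this rank-one structure, after which the desired monotonicity becomes the manifest nonnegativity of a squared coordinate.
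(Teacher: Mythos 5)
Your proof is correct and follows essentially the same route as the paper: identify $L'=L+bb^T$ with $b=\delta_i-\delta_j$, apply Sherman--Morrison to $K=(L+J)^{-1}$, and read off that the subtracted diagonal entry $(Kb)_l^2/(1+b^TKb)$ is nonnegative. The only (cosmetic) difference is that you justify the positivity of the denominator via positive definiteness of $L+J$, whereas the paper invokes the resistance bound of Lemma~\ref{lm:002}, and your self-contained justification is if anything slightly cleaner.
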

\begin{proof}
Letting $x=\delta_i-\delta_j$, we have $L'=L+xx^T$.
By the Sherman-Morrison formulae \cite[p.19]{HJ}, we have $K'=K-\frac{Kx x^T K}{1+x^T K x}$.
From Lemma~\ref{lm:002}(ii), 
the $(l,l)$-entry of $\frac{Kx x^T K}{1+x^T Kx}$, which equals to $\frac{(K_{l,i}-K_{l,j})^2}{1+K_{i,i}+K_{j,j}-2K_{i,j}}$, is nonnegative.
Thus $K_{l,l}\geq K'_{l,l}$ holds.
\end{proof}

\begin{lem}\label{lem:1002}
Let $T$ be a tree with $n$ vertices, $K$ the Gram matrix of $G$.
Then the following hold:
\begin{enumerate}
\item $K_{i,i}=\frac{1}{n}(r_i-\frac{1}{n}\sum_{1\leq j<m\leq n}d_T(j,m)+\frac{1}{n})$.\label{eq:1000}
\item Candidates which take the maximum value of $K_{i,i}$ are leaves on $T$.
\item Let $P=i_1\cdots i_l$ be the unique path from $i_1$ to $i_l$. Then $K_{i_1,i_1}>K_{i_1,i_2}>\cdots>K_{i_1,i_l}$.
In particular, candidates which take the minimum value of $K_{i,j}$ are leaves on $T$.
\end{enumerate} 
\end{lem}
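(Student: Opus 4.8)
The plan is to reduce all three parts to the single fact that in a tree the effective resistance coincides with the graph distance, together with one elementary transmission identity. For (i), since $T$ is a tree there is a unique path between any two vertices, so Lemma~\ref{lm:002}(ii) gives $r_{i,j}=d_T(i,j)$ for all $i,j$; substituting this into Lemma~\ref{lem:5-1}(iii), and using $\sum_{l}r_{i,l}=r_i$ together with $\sum_{k,l}r_{k,l}=2\sum_{j<m}d_T(j,m)$, yields the stated formula at once. This step is a pure substitution.

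The engine for (ii) and (iii) is the following. For an edge $\{u,v\}$ of $T$, deleting it splits $T$ into two components; let $n_v$ be the number of vertices in the component containing $v$. Because $r_{\cdot,\cdot}=d_T(\cdot,\cdot)$, replacing the base point $u$ by $v$ lowers the distance to each of the $n_v$ vertices on the $v$-side by one and raises it to each of the other $n-n_v$ vertices by one, so $r_v-r_u=(n-n_v)-n_v=n-2n_v$. For (ii), part (i) shows $K_{i,i}=\frac1n r_i+c$ with $c$ independent of $i$, so it suffices to locate the maximizers of the transmission $r_i$. If $v$ is not a leaf, its neighbours $w_1,\dots,w_d$ (with $d\ge 2$) have far-side sizes $a_1,\dots,a_d$ partitioning $V\setminus\{v\}$, hence $\sum_k a_k=n-1$ and $\min_k a_k\le\frac{n-1}{2}<\frac n2$; for that neighbour the identity gives $r_{w_k}-r_v=n-2a_k\ge 1>0$, so $v$ is not a maximizer and every maximizer is a leaf.

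For (iii), rewrite $K_{i,j}=\frac12(K_{i,i}+K_{j,j}-r_{i,j})$ from the definition of $r_{i,j}$. Along the path $P$ one has $r_{i_1,i_{m+1}}-r_{i_1,i_m}=1$, while part (i) gives $K_{i_m,i_m}-K_{i_{m+1},i_{m+1}}=\frac1n(r_{i_m}-r_{i_{m+1}})$. Applying the transmission identity to the edge $\{i_m,i_{m+1}\}$, whose far side (the one not containing $i_1$) has $a_m\ge 1$ vertices, gives $r_{i_m}-r_{i_{m+1}}=2a_m-n$, and combining these identities gives
$$K_{i_1,i_m}-K_{i_1,i_{m+1}}=\tfrac12\Bigl(\tfrac{2a_m-n}{n}+1\Bigr)=\tfrac{a_m}{n}>0,$$
so the chain $K_{i_1,i_1}>K_{i_1,i_2}>\cdots>K_{i_1,i_l}$ is strictly decreasing (the first step uses $r_{i_1,i_1}=0$).

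Finally, for the minimum claim, fix $i$ and suppose $\min_j K_{i,j}$ were attained at an internal vertex $v$; extending the unique $i$-to-$v$ path by one further edge at $v$ produces, by the strict monotonicity just proved, a strictly smaller value, a contradiction. Hence the row minimum is attained only at leaves, and applying this to each coordinate of a global minimizer (using $K_{i,j}=K_{j,i}$) shows that both of its endpoints are leaves. I expect the main obstacle to be bookkeeping rather than conceptual: correctly identifying the far side of each deleted edge and fixing the signs in the transmission identity, and phrasing the extension argument so that strict monotonicity genuinely forces the extremizers onto the leaves.
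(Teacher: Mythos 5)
Your argument is correct, and for parts (i) and (ii) it is essentially the paper's own: (i) is the same substitution of $r_{i,j}=d_T(i,j)$ (Lemma~\ref{lm:002}(ii)) into Lemma~\ref{lem:5-1}, and your transmission identity $r_v-r_u=n-2n_v$ across an edge $\{u,v\}$ is exactly the computation the paper carries out with its sets $V_1,V_2$ to show that a non-leaf cannot maximize $r_i$, hence cannot maximize $K_{i,i}$. Part (iii) is where you genuinely diverge. The paper fixes $i$, reads off the relation $d_jK_{i,j}-\sum_{l\sim j}K_{i,l}=-\frac{1}{n}$ from the $(i,j)$-entry of $KL=I-\frac{1}{n}J$, and runs an induction from the leaves of $T$ inward to obtain strict monotonicity along the path. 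You instead combine $K_{i,j}=\frac{1}{2}(K_{i,i}+K_{j,j}-r_{i,j})$ with part (i) and the transmission identity to compute each successive difference in closed form, $K_{i_1,i_m}-K_{i_1,i_{m+1}}=\frac{a_m}{n}$, where $a_m\geq 1$ is the number of vertices in the component of $T$ minus the edge $\{i_m,i_{m+1}\}$ that contains $i_{m+1}$; I checked the signs and the cancellation $\frac{1}{2}\bigl(\frac{2a_m-n}{n}+1\bigr)=\frac{a_m}{n}$, and they are right, including the first step via $r_{i_1,i_1}=0$. Your route buys an explicit quantitative gap of at least $\frac{1}{n}$ per step and avoids the nested induction; the paper's route is purely algebraic at this point (it needs only $KL=I-\frac{1}{n}J$, not the resistance-equals-distance interpretation). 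Your deduction of the ``in particular'' clause, extending the path at any non-leaf endpoint of a global minimizer and using symmetry of $K$, is the standard one and closes the argument.
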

\begin{proof}
(i): Follows from Lemma~\ref{lem:5-1}(ii) and Lemma~\ref{lm:002}(ii).

(ii): Let $i,j$ be adjacent vertices in $T$.
Let $V_1$ (respectively $V_2$) be a set of vertices whose distance from $i$ (respectively $j$) is greater than that from $j$ (respectively $i$).  
Then $V(T)=V_1\cup V_2 \cup \{i,j\}$ and 
\begin{align*}
r_i&=\sum_{x\in V(T)}r_{i,x}\\
&=\sum_{x\in V(T)}d(i,x)\\
&=\sum_{x\in V_1}d(i,x)+\sum_{x\in V_2}d(i,x)+d(i,j)\\
&=\sum_{x\in V_1}d(i,x)+\sum_{x\in V_2}(d(j,x)+1)+1\\
&=\sum_{x\in V_1}d(i,x)+\sum_{x\in V_2}d(j,x)+|V_2|+1.
\end{align*}
Similarly $r_j=\sum_{x\in V_1}d(i,x)+\sum_{x\in V_2}d(j,x)+|V_1|+1$.
Thus, by (ii), $K_{i,i}\geq K_{j,j}$ if and only if $|V_1|\leq |V_2|$.
Using this equivalent, if $K_{i,i}$ takes the maximum value in $\{K_{l,l}:1\leq l\leq n\}$ then $i$ is a leaf.

(iii): Let $i$ be a fixed vertex.
Let $j$ be a leaf which is not equal to $i$, and $j'$ be the unique vertex adjacent to $j$.
Then comparing the $(i,j)$-entry of $K L=I-\frac{1}{n}J$ yields $K_{i,j}-K_{i,j'}=-\frac{1}{n}$.
Thus $K_{i,j}<K_{i,j'}$ holds.
Next let $j$ be a vertex in $T$ which is not equal to $i$, and $j'$ be the unique vertex adjacent to $j$ such that $d_T(i,j')=d_T(i,j)-1$.
Assume that, for any vertex $l$ such that $j$ lies on the unique path between $i$ and $l$ and $d_T(i,l)>d_T(i,j)$, $K_{i,j}>K_{i,l}$.
Then comparing the $(i,j)$-entry of $K L=I-\frac{1}{n}J$ yields $d_j K_{i,j}-K_{i,j'}-\sum_{l\sim j,m\neq j}K_{i,l}=-\frac{1}{n}$.
Thus $K_{i,j'}-K_{i,j}=\sum_{l\sim j,m\neq j}(K_{i,j}-K_{i,l})+\frac{1}{n}>0$. 
This proves (iv).
\end{proof}

The following lemma will be used in the proof of Theorems~\ref{thm:1000} and \ref{thm:1001}. 
\begin{lem}\label{lem:1003}
Let $T$ be a tree with $n$ vertices.
Let $i$ be a leaf in $T$ and $j$ the unique vertex adjacent to $i$.
Let $T'$ be the tree obtained by deleting the leaf $i$ from $T$.
Then the following hold:
\begin{enumerate}
\item $\sum_{l\in V(T)}d_T(i,l)=\sum_{l\in V(T')}d_{T'}(j,l)+n-1$,
\item $\sum_{l,m\in V(T)}d_T(l,m)=\sum_{l,m\in V(T')}d_{T'}(l,m)+2\sum_{l\in V(T')}d_{T'}(j,l)+2(n-1)$.
\end{enumerate}
\end{lem}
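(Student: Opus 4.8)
The plan is to prove both identities by analyzing how distances in $T$ decompose when the leaf $i$ is removed. The key structural fact is that $i$ is a leaf attached only to $j$, so every path in $T$ that starts or ends at $i$ must pass through $j$ as its (unique) second vertex. Concretely, for any vertex $l \neq i$ we have $d_T(i,l) = d_T(j,l) + 1 = d_{T'}(j,l) + 1$, where the last equality holds because deleting the pendant vertex $i$ does not affect any path among the remaining vertices (the unique path between two vertices of $T'$ never uses $i$). This single observation is the engine for both parts.

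For part (i), I would write $\sum_{l \in V(T)} d_T(i,l) = d_T(i,i) + \sum_{l \in V(T'), l\neq i} d_T(i,l)$, note that the diagonal term vanishes, and substitute $d_T(i,l) = d_{T'}(j,l)+1$ for the remaining $n-1$ vertices $l \in V(T')$. This immediately gives
\begin{align*}
\sum_{l \in V(T)} d_T(i,l) = \sum_{l \in V(T')} \bigl(d_{T'}(j,l) + 1\bigr) = \sum_{l \in V(T')} d_{T'}(j,l) + (n-1),
\end{align*}
since $|V(T')| = n-1$. (One should note $d_{T'}(j,j)=0$ contributes nothing, matching $d_T(i,j)=1$ on the left.)

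For part (ii), the strategy is to split the double sum over ordered or unordered pairs in $V(T)$ according to whether $i$ is involved. Writing $\sum_{l,m \in V(T)} d_T(l,m)$ as the sum over pairs with neither endpoint equal to $i$, plus the pairs where exactly one endpoint is $i$, the first block is exactly $\sum_{l,m \in V(T')} d_{T'}(l,m)$ by the path-invariance above. The pairs involving $i$ contribute $\sum_{l \in V(T')} d_T(i,l)$ counted with the appropriate symmetry factor; using part (i) this equals $\sum_{l \in V(T')} d_{T'}(j,l) + (n-1)$, and the factor of $2$ in the target formula reflects that each such pair appears in both orders (or equivalently, $i$ can be either endpoint), yielding the $2\sum_{l\in V(T')} d_{T'}(j,l) + 2(n-1)$ correction term.

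The main obstacle is purely bookkeeping: making sure the normalization of the double sum (whether it runs over ordered pairs, unordered pairs, or includes the zero diagonal) is consistent between the two sides, so that the factor $2$ and the constant $2(n-1)$ come out exactly as stated rather than off by a factor or an additive constant. There is no deep difficulty once the leaf-reduction identity $d_T(i,l) = d_{T'}(j,l)+1$ is in hand; the real care is in confirming that the $n-1$ terms and the symmetrization are accounted for correctly. I would verify the final constants on a tiny example such as a path on three vertices before committing to the signs and coefficients.
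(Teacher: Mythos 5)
Your argument is correct and is exactly the ``direct calculation'' the paper leaves to the reader: the leaf-reduction identity $d_T(i,l)=d_{T'}(j,l)+1$ for $l\neq i$ gives (i) at once, and splitting the (ordered-pair) double sum according to whether $i$ is an endpoint gives (ii) with the factor $2$ and the constant $2(n-1)$ as stated. No gaps.
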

\begin{proof}
Both equalities follow from direct calculations.
\end{proof}

Define $\overline{K}=\overline{K}(G)$ to be the maximum value on the diagonal entries of $K$ for $G$.
\begin{thm}\label{thm:1000}
Let $G$ be a connected graph with $n$ vertices, $K$ the Gram matrix of $G$.
Then $\frac{1}{n}\leq \overline{K}\leq \overline{K}(P_n)$ with left equality if and only if $G$ is the complete graph and with right equality if and only if $G$ is the path.
\end{thm}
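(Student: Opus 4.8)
The plan is to prove the two bounds separately, each with its equality case.

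For the left inequality I would invoke Lemma~\ref{lm:1010} directly: every diagonal entry satisfies $K_{i,i}\ge \frac1n$, hence $\overline{K}=\max_i K_{i,i}\ge \frac1n$. If equality holds then every diagonal entry equals $\frac1n$, so by the equality clause of Lemma~\ref{lm:1010} every vertex is dominating, which forces $G$ to be complete; conversely for the complete graph $\sigma(K)=\{1/n\}$ (as noted in a remark in Section~\ref{gram}), so $K=\frac1n I$ and $\overline{K}=\frac1n$.

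For the right inequality I would first reduce to trees. Given a connected $G$, fix a spanning tree $T$ and delete the edges outside $T$ one at a time; each intermediate graph contains $T$ and is therefore connected, so Lemma~\ref{lem:0000} applies at each step and each deletion only increases the diagonal entries. Thus $K_{l,l}(G)\le K_{l,l}(T)$ for all $l$, whence $\overline{K}(G)\le\overline{K}(T)$, and it suffices to prove $\overline{K}(T)\le\overline{K}(P_n)$ for trees. I would do this by induction on $n$. The key computation is a recursion obtained by combining Lemma~\ref{lem:1002}(i) with Lemma~\ref{lem:1003}: if $i$ is a leaf of $T$ with neighbour $j$ and $T'=T-i$, then substituting the distance identities of Lemma~\ref{lem:1003} into $n^2K_{i,i}(T)=n\,r_i-\sum_{p<q}d_T(p,q)+1$ collapses everything into
\[
K_{i,i}(T)=\frac{(n-1)^2}{n^2}\bigl(1+K_{j,j}(T')\bigr).
\]
Since the maximal diagonal entry of a tree is attained at a leaf (Lemma~\ref{lem:1002}(ii)), choosing $i$ to be a maximizing leaf gives $\overline{K}(T)=\frac{(n-1)^2}{n^2}(1+K_{j,j}(T'))\le \frac{(n-1)^2}{n^2}(1+\overline{K}(T'))$, while the same recursion applied to an endpoint of $P_n$ (whose neighbour is an endpoint of $P_{n-1}$) gives $\overline{K}(P_n)=\frac{(n-1)^2}{n^2}(1+\overline{K}(P_{n-1}))$. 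The inductive hypothesis $\overline{K}(T')\le\overline{K}(P_{n-1})$ then yields $\overline{K}(T)\le\overline{K}(P_n)$.

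For equality on the right, the induction also pins down the extremal tree: $\overline{K}(T)=\overline{K}(P_n)$ forces $K_{j,j}(T')=\overline{K}(T')=\overline{K}(P_{n-1})$, so by induction $T'\cong P_{n-1}$ with $j$ an endpoint, whence $T\cong P_n$. The remaining and, I expect, most delicate point is to upgrade $\overline{K}(G)\le\overline{K}(T)\le\overline{K}(P_n)$ to a strict inequality whenever $G$ is connected but not a path, including graphs such as $C_n$ all of whose spanning trees are paths, so that one cannot simply choose a non-path spanning tree. Here I would argue directly: the diagonal entries of $P_n$ are strictly maximized at its two endpoints (from the balance criterion $|V_1|\le|V_2|$ in the proof of Lemma~\ref{lem:1002}(ii)), and the Sherman--Morrison formula in Lemma~\ref{lem:0000} shows that adding an edge $\{a,b\}$ decreases $K_{\ell,\ell}$ by $\frac{(K_{\ell,a}-K_{\ell,b})^2}{1+r_{a,b}}$; since $a$ and $b$ lie at distinct distances from an endpoint $\ell$, the strict monotonicity of $K_{\ell,\cdot}$ along the path (Lemma~\ref{lem:1002}(iii)) makes this decrease strict at each endpoint, while the other vertices are already strictly below $\overline{K}(P_n)$. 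Tracking that further edge additions only lower diagonals then gives $\overline{K}(G)<\overline{K}(P_n)$ for every connected $G\neq P_n$. Verifying this strictness uniformly is the main obstacle; the inequalities themselves are routine once the recursion is in hand.
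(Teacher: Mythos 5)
Your proposal is correct and follows essentially the same route as the paper: Lemma~\ref{lm:1010} for the lower bound and its equality case, reduction to spanning trees via Lemma~\ref{lem:0000}, and the leaf-deletion recursion $K_{1,1}=\frac{(n-1)^2}{n^2}\bigl(1+K'_{2,2}\bigr)$ obtained from Lemmas~\ref{lem:1002} and~\ref{lem:1003}, driving an induction on $n$. Your additional discussion of strictness for connected non-tree graphs such as $C_n$ (the Sherman--Morrison decrement $\frac{(K_{\ell,a}-K_{\ell,b})^2}{1+r_{a,b}}$ being strictly positive at the path's endpoints by Lemma~\ref{lem:1002}(iii)) addresses a point the paper's proof passes over silently, and is a worthwhile supplement rather than a deviation.
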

\begin{proof}
By Lemma~\ref{lem:0000}, it is easy to see that $\overline{K}$ takes the minimum if and only if $G$ is the complete graph.

For the right inequality, we prove theorem by induction on $n$.
For $n=2$, there is nothing to prove.
We assume that the statement holds for $n-1\geq 2$.
By Lemma~\ref{lem:0000}, it is enough to consider the case that $G$ is a tree $T$ and the vertices taking the maximum are leaves on the tree.
Let $1$ be a leaf on the tree $T$ and $2$ the vertex adjacent to $1$ in $T$.
Let $T'$ be the tree with vertices $\{2,\ldots,n\}$ in $T$ with Gram matrix $K'$.
Substituting the equations in Lemma~\ref{lem:1003}(i) and (ii) into the equation in Lemma~\ref{lem:1002}(i), we obtain 
\begin{align*}
K_{1,1}
&=\frac{(n-1)^2}{n^2}K'_{2,2}+\frac{(n-1)^2}{n^2}.
\end{align*}
The value takes the maximum if and only if $T'$ is the path with $n-1$ vertices and the vertex $2$ in $T'$ is a leaf by the assumption for induction.
Thus $K_{1,1}$ takes the maximum if and only if $T$ is the path with $n$ vertices and the vertex $1$ in $T$ is a leaf.
\end{proof}

Define $\underline{K}=\underline{K}(G)$ to be the minimum value on the entries of $K$ for $G$.
\begin{thm}\label{thm:1001}
Let $T$ be a tree with $n$ vertices, $K$ the Gram matrix of $T$.
Then $\underline{K}(P_n)\leq\underline{K}\leq 0$ with left equality if and only if $T$ is the path and with right equality if and only if $T$ is the star.
\end{thm}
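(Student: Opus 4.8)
The plan is to reduce everything to leaf pairs and then treat the two inequalities separately, the lower bound being the substantial one. By Lemma~\ref{lm:1010} every diagonal entry of $K$ is at least $1/n>0$, so $\underline{K}$ is attained at an off-diagonal entry; and by the last assertion of Lemma~\ref{lem:1002}(iii) a minimizing entry $K_{a,b}$ may be taken with $a,b$ leaves of $T$. Since $T$ is a tree, Lemma~\ref{lm:002}(ii) gives $r_{a,b}=d_T(a,b)$, so throughout I would use the identity
\begin{align*}
K_{a,b}=\tfrac12\bigl(K_{a,a}+K_{b,b}-d_T(a,b)\bigr),
\end{align*}
which converts statements about $\underline{K}$ into statements about diagonal entries (controlled by Theorem~\ref{thm:1000} and Lemma~\ref{lem:1002}(i)) and about the distance between two leaves.

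For the upper bound, Proposition~\ref{lm:1008} already yields $\underline{K}\le 0$: each row of $K$ has nonpositive off-diagonal minimum, and $\underline{K}$ is the least of these. To identify the extremal tree I would compute from the displayed identity that for the star every leaf--leaf entry equals $-1/n$ (its leaf diagonals are $\tfrac{n-1}{n}$ and all leaf pairs are at distance $2$), so the star attains the value $-1/n$. Conversely, if $T$ is not a star its diameter is at least $3$, giving two leaves $a,b$ with $d_T(a,b)\ge 3$; inserting this into the identity and bounding the two leaf diagonals from below should force $K_{a,b}<-1/n$, so the star is the unique tree maximizing $\underline{K}$.

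The core of the argument is the lower bound $\underline{K}(P_n)\le\underline{K}$, which I would prove by induction on $n$, mirroring the proof of Theorem~\ref{thm:1000}. Given a tree $T\ne P_n$, write $\underline{K}(T)=K_{a,b}$ with $a,b$ leaves; since $T$ is not a path it has a third leaf, so I would delete a leaf $\ell\notin\{a,b\}$ to obtain a tree $T'$ on $n-1$ vertices in which $a,b$ are still leaves and $d_{T'}(a,b)=d_T(a,b)$. Substituting the distance recursions of Lemma~\ref{lem:1003} into the formula of Lemma~\ref{lem:1002}(i) expresses $K_{a,a},K_{b,b}$, and hence $K_{a,b}$, through the corresponding quantities for $T'$, exactly as the recursion $K_{1,1}=\tfrac{(n-1)^2}{n^2}(K'_{2,2}+1)$ arose in Theorem~\ref{thm:1000}. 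Comparing this recursion applied to $T'$ with the same recursion applied to $P_{n-1}$, and invoking the inductive hypothesis $\underline{K}(T')\ge\underline{K}(P_{n-1})$ together with the explicit path values from Example~\ref{ex:path}, I expect to conclude $\underline{K}(T)\ge\underline{K}(P_n)$, with equality only when every reduction step is tight, forcing $T'=P_{n-1}$ with $a,b$ at its ends, i.e.\ $T=P_n$.

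The hard part is exactly this inductive step. In Theorem~\ref{thm:1000} the extremal entry is a \emph{diagonal} one, automatically attained at a leaf, so the leaf to delete and the recursion are canonical; here the extremal entry is a leaf-\emph{pair} entry, and the minimizing pair may migrate as vertices are removed. The delicate points are therefore to choose the deleted leaf so that the minimizing pair and its mutual distance survive, to dispose of the case in which every other leaf lies on the $a$--$b$ path (so $T$ is already a path), and to verify that the resulting two-sided recursion is monotone in the right direction and tight precisely for the Hamiltonian path. Controlling the interaction between the growing distance $d_T(a,b)$ and the simultaneously growing leaf diagonals, which push $K_{a,b}$ in opposite directions, is the crux of the estimate.
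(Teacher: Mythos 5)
Your reduction to off-diagonal, leaf--leaf entries (Lemma~\ref{lem:1002}(iii)) and your use of Proposition~\ref{lm:1008} for $\underline{K}\le 0$ coincide with the paper. But the core of the theorem is the left inequality, and there your inductive scheme has a genuine gap. You propose to delete a \emph{third} leaf $\ell\notin\{a,b\}$ and pass from $T$ to a tree $T'$ on $n-1$ vertices. Write $K_{a,b}=\frac{1}{2n}(r_a+r_b)-\frac{1}{n^2}\sum_{l<m}d(l,m)+\frac{1}{n^2}-\frac{1}{2}d(a,b)$ (Lemma~\ref{lem:5-1} plus $r_{a,b}=d(a,b)$). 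Under your deletion the three shape-dependent quantities $r_a+r_b$, $\sum d(l,m)$ and $d(a,b)$ transform with \emph{three different} scaling ratios ($\tfrac{n-1}{n}$, $\tfrac{(n-1)^2}{n^2}$ and $1$ respectively, up to additive corrections involving $d_T(a,\ell)$, $d_T(b,\ell)$ and $r_{T'}(q)$ for the neighbour $q$ of $\ell$). Consequently $K_{a,b}(T)$ is \emph{not} an affine function of $K'_{a,b}(T')$ with coefficients depending only on $n$; residual terms depending on the shape of $T'$ survive, and the inductive hypothesis $\underline{K}(T')\ge\underline{K}(P_{n-1})$ is not enough to close the argument. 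This is exactly the ``interaction between the growing distance and the growing leaf diagonals'' that you flag as the crux but do not resolve. The paper resolves it by deleting \emph{both} extremal leaves $a$ and $b$ at once (so the induction runs $n\to n-2$ with base cases $n=2,3$): Lemma~\ref{lem:1003}, which relates the distance sum of a deleted leaf to that of its neighbour, then applies twice and the formula collapses to
\begin{align*}
K_{a,b}(T)=\frac{(n-2)^2}{n^2}\,K'_{a',b'}(T')-\frac{n-1}{n^2}\,d_{T'}(a',b')-\frac{2(n-1)}{n^2},
\end{align*}
where $a',b'$ are the neighbours of $a,b$. Minimizing the left side now decouples into minimizing $K'_{a',b'}$ and maximizing $d_{T'}(a',b')$, and both optima are attained simultaneously by the path with $a',b'$ its endpoints, which is what makes the induction and the uniqueness statement go through.

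Two further points on the right-hand side. First, your proposed equality analysis runs the inequality the wrong way: from $K_{a,b}=\tfrac12(K_{a,a}+K_{b,b}-d(a,b))$ with $d(a,b)\ge 3$, forcing $K_{a,b}$ to be small requires an \emph{upper} bound on the leaf diagonals, not a lower bound, and Theorem~\ref{thm:1000} only gives the useless bound $\overline{K}(P_n)\sim n/3$. The paper instead gets the characterization directly from the equality case of Proposition~\ref{lm:1008} (a dominating vertex in a tree forces a star). Second, your computation that every leaf--leaf entry of the star equals $-1/n$ is correct, and it shows that for $n\ge 3$ the star has $\underline{K}=-1/n<0$; you have in effect noticed that the literal statement ``$\underline{K}=0$ iff $T$ is the star'' needs care, but you then silently switch to proving a different assertion (the star uniquely maximizes $\underline{K}$) without an argument that closes. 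If you pursue your version, you must prove $\underline{K}(T)<-1/n$ for every non-star tree, and the diagonal-bounding route you sketch does not do that.
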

\begin{proof}
By Lemma~\ref{lem:1002}(iii), the minimum in $K_{i,j}$ for all $1\leq i,j\leq n$ takes at distinct $i,j$.
Applying Proposition~\ref{lm:1008} to the case of trees implies the right inequality and characterizes the graph attaining the bound.

For the left inequality, we prove theorem by induction on $n$.
For $n=2,3$, there is nothing to prove.
We assume that the statement holds for $n-2\geq 2$.
By Lemma~\ref{lem:1002}(iii), it is enough to consider the case that the vertices are  leaves on the tree.

Let $1,n$ be leaves on the tree $T$ and $2$ (respectively $n-1$) the vertex adjacent to $1$ (respectively $n$) in $T$.
Let $T'$ be the tree with vertices $\{2,\ldots,n-1\}$ in $T$ with Gram matrix $K'$.
Substituting equalities obtained by using Lemma~\ref{lem:1003} two times into the equation in Lemma~\ref{lem:1002}(i) and using Lemma~\ref{lm:002}(ii), we obtain 
\begin{align*}
K_{1,n}
&=\frac{(n-2)^2}{n^2}K'_{2,n-1}-\frac{n-1}{n^2}d_{T'}(2,n-1)-\frac{2(n-1)}{n^2}.
\end{align*}
The value $K_{1,n}$ takes the minimum if and only if $K'_{2,n-1}$ takes the minimum and $d_{T'}(2,n-1)$ takes the maximum.
By the assumption of the induction, this condition is equivalent that $T'$ is the path with $n-2$ vertices and the vertices $2,n-1$ in $T'$ are leaves.
Thus $K_{1,n}$ takes the minimum if and only if $T$ is the path with $n$ vertices and the vertices $1,n$ in $T$ are leaves.
\end{proof}

\begin{rem}\rm
All entries of $K$ for the path of length $n$ are explicitly given in Example~\ref{ex:path}. 
\end{rem}

Finally we give the lower bound of the entries of vertices with distance $p-1$ for trees for $p\geq2$.
The following lemma will be used in the proof of Theorem~\ref{thm:112}.  
\begin{lem}\label{lem:111}
Let $n,p$ be integers such that $n\geq p\geq 2$.
Let $T$ be a tree with $n$-vertices and $1,2,\cdots,p$ a path with length $p-1$ in $T$.
Let $T_i$ denote the tree of $T-\cup_{k=1}^{p-1}\{k,k+1\}$ containing $i$ with $n_i$ vertices for each $1\leq i\leq p$. 
Then the following hold:
\begin{enumerate}
\item $r_{T,1}=\sum_{k=1}^p(r_{T_k,k}+(k-1)n_k)$ and $r_{T,p}=\sum_{k=1}^p(r_{T_k,k}+(p-k)n_k)$.
\item 
$
\sum_{(l,m)\in V(T)^2}d(l,m)=\sum_{i=1}^p \sum_{(l,m)\in V(T_i)^2}d(l,m)+2\sum_{i=1}^p(n-n_i)r_{T_i,i}+\sum_{i,j=1}^p|i-j|n_i n_j
$.
\item $K_{1,p}\geq \frac{1}{n}-\frac{p}{n^2}+\frac{1}{n^2}-\frac{\sum_{i,j=1}^p|i-j|n_i n_j}{2n^2}$ with equality if and only if each $T_i$ is a star with the dominating vertex $i$.
\end{enumerate}
\end{lem}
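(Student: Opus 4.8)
The plan is to prove the three parts in order, since (i) and (ii) are the inputs to (iii). Throughout I would use that on a tree $r_{i,j}=d(i,j)$ (Lemma~\ref{lm:002}(ii)) and that distances behave additively across the decomposition $T=T_1\cup\cdots\cup T_p$, whose pieces are glued only along the path edges $\{k,k+1\}$. For part (i), the key observation is that for a vertex $l\in V(T_k)$ the unique $1$–$l$ path in $T$ first runs along $1\to 2\to\cdots\to k$ (contributing $k-1$) and then follows the unique $k$–$l$ path inside $T_k$, so $d_T(1,l)=(k-1)+d_{T_k}(k,l)$. Summing over all $l$ and grouping by the containing subtree gives $r_{T,1}=\sum_k[(k-1)n_k+r_{T_k,k}]$, and the same argument with $k-1$ replaced by $p-k$ gives $r_{T,p}$.

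For part (ii), I would split $\sum_{(l,m)\in V(T)^2}d(l,m)$ according to the subtrees $l\in T_i$, $m\in T_j$. When $i=j$ the $T$-distance equals the $T_i$-distance, yielding $\sum_i\sum_{(l,m)\in V(T_i)^2}d(l,m)$. When $i\neq j$ the path decomposes as $l\to i\to\cdots\to j\to m$, so $d(l,m)=d_{T_i}(i,l)+|i-j|+d_{T_j}(j,m)$; summing the three pieces over all $l,m$ and $i\neq j$ produces the distance sums $2\sum_i(n-n_i)r_{T_i,i}$ (the two endpoint terms combine by symmetry) and the path term $\sum_{i,j}|i-j|n_in_j$.

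Part (iii) is the crux. First I would convert to a formula for $K_{1,p}$: from $r_{1,p}=K_{1,1}+K_{p,p}-2K_{1,p}$ with $r_{1,p}=d_T(1,p)=p-1$, combined with the diagonal formula in Lemma~\ref{lem:1002}(i), a direct substitution of (i) and (ii) collapses (after cancellation of the $R=\sum_k r_{T_k,k}$ terms) to
$$K_{1,p}=\frac{1}{n^2}\sum_{i=1}^p\bigl(n_i r_{T_i,i}-W(T_i)\bigr)-\frac{1}{2n^2}\sum_{i,j=1}^p|i-j|n_in_j+\frac{1}{n^2},$$
where $W(T_i)=\sum_{l<m}d_{T_i}(l,m)$ is the Wiener index of $T_i$. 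Since the middle and last terms already match the claimed bound, the whole statement reduces to the single-tree inequality $n_i r_{T_i,i}-W(T_i)\ge n_i-1$, with equality exactly when $T_i$ is a star centered at $i$; summing over $i$ gives $\sum_i(n_i r_{T_i,i}-W(T_i))\ge\sum_i(n_i-1)=n-p$, which is precisely what is needed.

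To settle that single-tree inequality I would pass to edge-counting formulas: for each edge $e$ of $T_i$ let $f_i(e)$ be the number of vertices on the side of $e$ away from $i$. Then $r_{T_i,i}=\sum_e f_i(e)$ and $W(T_i)=\sum_e f_i(e)\bigl(n_i-f_i(e)\bigr)$, so the quadratic identity $n_i a-a(n_i-a)=a^2$ gives $n_i r_{T_i,i}-W(T_i)=\sum_e f_i(e)^2$. As each $f_i(e)\ge 1$ and $T_i$ has $n_i-1$ edges, this sum is at least $n_i-1$, with equality iff every edge satisfies $f_i(e)=1$, i.e. every edge is incident to $i$, i.e. $T_i$ is a star with dominating vertex $i$. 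The only genuinely non-mechanical step is recognizing this edge-sum identity $n_i r_{T_i,i}-W(T_i)=\sum_e f_i(e)^2$; once the transmission and Wiener index are written as edge sums the bound and its sharp equality case are immediate, and everything else is bookkeeping with the distance decomposition.
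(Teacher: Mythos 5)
Your proof is correct, and parts (i) and (ii) together with the reduction in (iii) follow the same route as the paper: the paper likewise substitutes (i) and (ii) into the diagonal formula of Lemma~\ref{lem:1002}(i) (using $r_{1,p}=p-1$), cancels the $\sum_k r_{T_k,k}$ terms, and arrives at exactly your expression $K_{1,p}=\frac{1}{n^2}\sum_i\bigl(n_i r_{T_i,i}-W(T_i)\bigr)-\frac{1}{2n^2}\sum_{i,j}|i-j|n_in_j+\frac{1}{n^2}$, written there as $\frac{1}{n^2}\sum_i n_i^2K(T_i)_{i,i}-\frac{p}{n^2}+\cdots$ via the identity $n_i r_{T_i,i}-W(T_i)=n_i^2K(T_i)_{i,i}-1$. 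The one place you diverge is the terminal inequality: the paper bounds $n_i^2K(T_i)_{i,i}\ge n_i$ by citing Lemma~\ref{lm:1010} (i.e.\ $K_{i,i}\ge\frac{1}{n}$ with equality iff $i$ is dominating, which rests on Fiedler's theorem \cite{F}), whereas you prove the equivalent bound $n_i r_{T_i,i}-W(T_i)\ge n_i-1$ directly from the edge-sum identity $n_i r_{T_i,i}-W(T_i)=\sum_e f_i(e)^2$ with $f_i(e)$ the number of vertices across $e$ from $i$. Your identity is correct ($r_{T_i,i}=\sum_e f_i(e)$ and $W(T_i)=\sum_e f_i(e)(n_i-f_i(e))$ are the standard edge-counting formulas), and the equality case $f_i(e)=1$ for all $e$ does characterize the star centered at $i$ (if some vertex were at distance $\ge 2$, the first edge of its path from $i$ would have $f_i(e)\ge 2$). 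What your variant buys is a self-contained, purely combinatorial proof of the key inequality that avoids any appeal to Fiedler's result on inverse positive definite matrices; what the paper's version buys is brevity and reuse of a lemma it needs elsewhere anyway.
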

\begin{proof}
(i), (ii): Follow from direct calculations.

(iii): From (i) and (ii) above and Lemma~\ref{lm:1010}, 
\begin{align*}
K(T)_{1,p}
&=\frac{1}{2n}(r_1+r_p-\frac{1}{n}\sum_{(l,m)\in V(T)^2}d(l,m))+\frac{1}{n^2}-\frac{p-1}{2}\\
&=\frac{1}{n^2}\sum_{i=1}^p n_i(r_{T_i,i}-\frac{1}{2 n_i}\sum_{(l,m)\in V(T_i)^2}d(l,m))+\frac{1}{n^2}-\frac{\sum_{i,j=1}^p|i-j|n_i n_j}{2n^2}\\
&=\frac{1}{n^2}\sum_{i=1}^p n_i^2K(T_i)_{i,i}-\frac{p}{n^2}+\frac{1}{n^2}-\frac{\sum_{i,j=1}^p|i-j|n_i n_j}{2n^2}\\
&\geq \frac{1}{n^2}\sum_{i=1}^p n_i-\frac{p}{n^2}+\frac{1}{n^2}-\frac{\sum_{i,j=1}^p|i-j|n_i n_j}{2n^2}\\
&=\frac{1}{n}-\frac{p}{n^2}+\frac{1}{n^2}-\frac{\sum_{i,j=1}^p|i-j|n_i n_j}{2n^2}.
\end{align*}
Equality holds if and only if $K(T_i)_{i,i}=\frac{1}{n_i}$ for each $i$.
The latter condition is equivalent that $T_i$ is a star with the dominating vertex $i$ by Lemma~\ref{lm:1010}.
This proves (iii).
\end{proof}
As a consequence of Lemma~\ref{lem:111}, we obtain the lower bound for the entires of the Gram matrix corresponding to vertices with distance $p-1$.
Denote by $T^*_{r,s,p}$ the tree of order $r+s+p-2=n$, which is obtained by joining the dominating vertices of two stars with $r$ and $s$ vertices by an path with length $p-1$. 
\begin{thm}\label{thm:112}
Let $T$ be a tree with $n$-vertices.
Let $1,p$ be vertices with length $p-1$ in $T$.
Then $K_{1,p}\geq \frac{1}{n}-\frac{p}{n^2}+\frac{1}{n^2}
-\frac{p-1}{n^2}(\frac{(3n-2p+3)(p-2)}{6}+\lceil \frac{n-p+2}{2}\rceil\lfloor \frac{n-p+2}{2} \rfloor)$ 
with equality if and only if $T$ is a tree $T^*_{\lceil \frac{n-p+2}{2}\rceil, \lfloor \frac{n-p+2}{2} \rfloor,p}$ and $1$ and $p$ are the vertices with degree $\lceil\frac{n-p+2}{2}\rceil,\lfloor \frac{n-p+2}{2} \rfloor$.  
\end{thm}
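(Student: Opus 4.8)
The plan is to reduce the theorem to a single extremal problem for integer compositions and then solve that problem with a prefix-sum identity. By Lemma~\ref{lem:111}(iii) we already have
$$K_{1,p}\ge \frac1n-\frac p{n^2}+\frac1{n^2}-\frac{S}{2n^2},\qquad S:=\sum_{i,j=1}^p|i-j|\,n_in_j,$$
with equality precisely when every $T_i$ is a star centred at $i$. Given any profile $(n_1,\dots,n_p)$ of positive integers with $\sum_i n_i=n$, one can realise each $T_i$ as such a star, so the right-hand side is genuinely attained by some tree. Hence for every admissible $T$ we have $K_{1,p}\ge \frac1n-\frac p{n^2}+\frac1{n^2}-\frac{S_{\max}}{2n^2}$, where $S_{\max}$ is the maximum of $S$ over all compositions $n=n_1+\dots+n_p$ with $n_i\ge1$; and this bound is sharp, attained exactly by the trees whose component sizes realise $S_{\max}$ and whose components are stars. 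Everything therefore rests on computing $S_{\max}$ and identifying the maximising profile.

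The key step is the identity
$$S=2\sum_{t=1}^{p-1}N_t(n-N_t),\qquad N_t:=\sum_{i=1}^t n_i .$$
It follows from viewing the $n$ vertices as tokens placed on the sites $1,\dots,p$ (with $n_i$ tokens at site $i$) and writing $|i-j|$ as the number of cuts $t$ with $i\le t<j$: a cut $t$ separates $N_t$ tokens from $n-N_t$ tokens, contributing $2N_t(n-N_t)$ ordered pairs. Since $N_t(n-N_t)=\frac{n^2}{4}-(N_t-\frac n2)^2$ and the prefix sums form a strictly increasing sequence of integers $1\le N_1<\dots<N_{p-1}\le n-1$ (the gaps being exactly the $n_i$), maximising $S$ amounts to choosing the $p-1$ integers in $[1,n-1]$ that minimise $\sum_t(N_t-\frac n2)^2$, i.e. the $p-1$ integers nearest to $n/2$. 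As $n\ge p$, these form an unbroken block of $p-1$ consecutive integers centred at $n/2$. Being consecutive forces $n_i=N_i-N_{i-1}=1$ for $2\le i\le p-1$, and being centred forces $n_1=N_1$ and $n_p=n-N_{p-1}$ to equal $\lceil\frac{n-p+2}{2}\rceil$ and $\lfloor\frac{n-p+2}{2}\rfloor$.

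Substituting this optimal profile into the displayed inequality and simplifying yields the stated closed form: the cross term (pairs split between the two ends) contributes $2(p-1)\lceil\frac{n-p+2}{2}\rceil\lfloor\frac{n-p+2}{2}\rfloor$ to $S_{\max}$, while the remaining contributions, which depend only on $n_1+n_p$ and not on how it is split, sum to $\frac{(p-1)(p-2)(3n-2p+3)}{3}$; dividing by $2n^2$ produces exactly $\frac{p-1}{n^2}\bigl(\frac{(3n-2p+3)(p-2)}{6}+\lceil\frac{n-p+2}{2}\rceil\lfloor\frac{n-p+2}{2}\rfloor\bigr)$. For the equality case, the optimal sizes together with "each $T_i$ a star" force the interior components to be single vertices, forming the path $1,\dots,p$, while $T_1$ and $T_p$ are stars of orders $\lceil\frac{n-p+2}{2}\rceil$ and $\lfloor\frac{n-p+2}{2}\rfloor$ centred at $1$ and $p$; this is precisely $T^*_{\lceil\frac{n-p+2}{2}\rceil,\lfloor\frac{n-p+2}{2}\rfloor,p}$ with $1$ and $p$ the two vertices of the stated degrees.

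The main thing to get right is the extremal problem, and the prefix-sum identity is what makes it painless: without it one is tempted to run a direct exchange on the $n_i$, moving surplus interior mass to an endpoint, but the sign of such a move depends on how the remaining mass is distributed and the analysis degenerates into a fussy case check. Recasting $S$ through the prefix sums turns the task into maximising a sum of values of the single concave, symmetric function $N\mapsto N(n-N)$ over a $(p-1)$-element set of integers, where the optimum is transparently the centred consecutive block. The only remaining care is checking that this block lies inside $[1,n-1]$ (which is where the hypothesis $n\ge p$ enters) and carrying out the final constant-matching, which is routine once $S_{\max}$ is in hand.
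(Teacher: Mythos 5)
Your argument is correct, and it reaches the paper's bound and equality characterization by a genuinely different route at the decisive step. Both proofs start from Lemma~\ref{lem:111}(iii) and reduce the theorem to maximizing $S=\sum_{i,j=1}^p|i-j|n_in_j$ over positive integer compositions $n_1+\cdots+n_p=n$, and both then must identify $\{n_1,n_p\}=\{\lceil \frac{n-p+2}{2}\rceil,\lfloor \frac{n-p+2}{2}\rfloor\}$, $n_2=\cdots=n_{p-1}=1$ as the unique maximizer (up to swapping the ends). The paper does this by a double induction on $(n,p)$: a rearrangement-inequality normalization, the recursion $f_p(n_1,\ldots,n_p)=f_p(n_1-1,n_2,\ldots,n_p-1)\pm 2(p-1)(n-1)$ to step down from $n$ to $n-2$, and a separate case analysis when $n_1=1$ or $n_p=1$ that drops to $p-1$. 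Your prefix-sum identity $S=2\sum_{t=1}^{p-1}N_t(n-N_t)$ with $N_t(n-N_t)=\frac{n^2}{4}-(N_t-\frac n2)^2$ replaces all of that with a one-line selection problem: pick the $p-1$ integers of $[1,n-1]$ closest to $n/2$. This buys a shorter argument and, more importantly, a transparent equality analysis --- since each value of $N\mapsto N(n-N)$ occurs at most twice on $[1,n-1]$, the optimal set is a centred consecutive block, unique up to the boundary tie, which is exactly the stated unordered pair $\{\lceil\cdot\rceil,\lfloor\cdot\rfloor\}$; the paper's induction has to track uniqueness through each inductive step. Your closing computation of $S_{\max}$ (cross term $2(p-1)\lceil \frac{n-p+2}{2}\rceil\lfloor \frac{n-p+2}{2}\rfloor$ plus the $\frac{(p-1)(p-2)(3n-2p+3)}{3}$ piece depending only on $n_1+n_p$) checks out against the stated constant, and you correctly note that equality additionally requires each $T_i$ to be a star centred at $i$, which with the optimal sizes yields precisely $T^*_{\lceil \frac{n-p+2}{2}\rceil,\lfloor \frac{n-p+2}{2}\rfloor,p}$.
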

\begin{proof}
By Lemma~\ref{lem:111}(iii), we have $K_{1,p}\geq \frac{1}{n}-\frac{p}{n^2}+\frac{1}{n^2}-\frac{\sum_{i,j=1}^p|i-j|n_i n_j}{2n^2}$.
We show that, under the conditions $\sum_{i=1}^p n_i=n$ and each $n_i$ is a positive integer, $\sum_{i,j=1}^p|i-j|n_i n_j$ takes maximum only when
$\{n_1,n_p\}=\{\lceil \frac{n-p+2}{2}\rceil, \lfloor \frac{n-p+2}{2} \rfloor\}$ and $n_2=\cdots =n_{p-1}=1$ by induction on $n$ and $p$.

For $n=p$, there is nothing to prove. 
Set $f_p(n_1,\ldots,n_p)=\sum_{i,j=1}^p|i-j|n_i n_j$.
Let $\sigma$ be a permutation on $\{1,\ldots,p\}$ 
such that 
$n_{\sigma(1)}\geq n_{\sigma(2)}\geq\cdots \geq n_{\sigma(\lfloor \frac{p}{2} \rfloor)}$ and $n_{\sigma(\lceil \frac{p}{2} \rceil)}\leq n_{\sigma(\lceil \frac{p}{2} \rceil+1)}\leq\cdots \leq n_{\sigma(p)}$.
Then it follows from the rearrangement inequality that $f_p(n_1,\ldots,n_p)\leq f_p(n_{\sigma(1)},\ldots,n_{\sigma(p)})$. 
Thus the statement is true for $n=p+1$.

Next we consider the case $p=2$.
In this case, it is easily verified that $f_2(n_1,n_2)$ takes maximum only when $\{n_1,n_p\}=\{\lceil \frac{n}{2}\rceil, \lfloor \frac{n}{2} \rfloor\}$.

Assume the statement is true for $(n-2,p)$ with $n-2\geq p$ and $(n-1,p-1)$ with $n-1\geq p-1\geq 2$.
We use the following equality:
\begin{align}
f_p(n_1,n_2,\ldots,n_{p-1},n_p)&=f_p(n_1-1,n_2,\ldots,n_{p-1},n_p-1)-2(p-1)(\sum_{i=1}^p n_i-1)\nonumber \\ 
&=f_p(n_1-1,n_2,\ldots,n_{p-1},n_p-1)-2(p-1)(n-1).\label{eq:6-13}
\end{align}
By the above argument, we may assume that $n_{1}\geq n_{2}\geq\cdots \geq n_{\lfloor \frac{p}{2} \rfloor}$ and 
$n_{\lceil \frac{p}{2} \rceil}\leq n_{\lceil p \rceil+1}\leq\cdots \leq n_{p}$.   
If $n_p=1$ holds, then $n_1\geq 2$ and by \eqref{eq:6-13} 
\begin{align*}
f_p(n_1,n_2,\ldots,n_{p-1},n_p)&=f_p(n_1-1,n_2,\ldots,n_{p-1},n_p-1)-2(p-1)(n-1)\\
&=f_{p-1}(n_1-1,n_2,\ldots,n_{p-1})-2(p-1)(n-1).
\end{align*}
Thus $f_p(n_1,n_2,\ldots,n_{p-1},n_p)$ takes maximum if and only if $f_{p-1}(n_1-1,n_2,\ldots,n_{p-1})$ takes maximum.
Then, in particular, $\{n_1-1,n_{p-1}\}=\{\lceil \frac{n-p+2}{2}\rceil, \lfloor \frac{n-p+2}{2} \rfloor\}$ holds.
However this case cannot attain the maximum value because of the fact that maximum is taken for $n_{1}\geq n_{2}\geq\cdots \geq n_{\lfloor \frac{p}{2} \rfloor}$ and $n_{\lceil \frac{p}{2} \rceil}\leq n_{\lceil \frac{p}{2}+1 \rceil}\leq \cdots n_{p}$.
Similarly the case $n_1=1$ yields the same conclusion.

Thus we consider the case $n_1\geq 2, n_p\geq 2$.
Then by \eqref{eq:6-13} and induction, $f_p(n_1-1,n_2,\ldots,n_{p-1},n_p-1)$ takes maximum only when $\{n_1-1,n_p-1\}=\{\lceil \frac{n-p}{2}\rceil,\lfloor \frac{n-p}{2} \rfloor\}$ and  $n_2=\cdots=n_{p-1}=1$. 
thus the statement is true for $n$.
Equality holds if and only if $\{n_1,n_p\}=\{\lceil \frac{n-p+2}{2}\rceil, \lfloor \frac{n-p+2}{2} \rfloor\}$ and $n_2=\cdots=n_{p-1}=1$ hold, equivalently the tree is $T^*_{\lceil \frac{n-p+2}{2}\rceil, \lfloor \frac{n-p+2}{2} \rfloor,p}$. 
This completes the proof. 
\end{proof}

\begin{rem}\rm
In \cite{Z2,Z1,ZW}, the matrix $(L+I)^{-1}$, called a doubly stochastic graph matrix, has been extensively studied.
The upper bound for the diagonal entries of a doubly stochastic graph matrix was given in \cite{Z1}, the upper bound and the lower bound for the entries of a doubly stochastic graph matrix were given in \cite{ZW} 
and the lower bound for entries with adjacent vertices for trees was given in \cite{Z2}.
Also a graph which attains each of bounds above is characterized.
The results above and Theorems~\ref{thm:1000},\ref{thm:1001} and \ref{thm:112} look like very similar, but the proofs are completely different.
\end{rem}

\end{document}